
\documentclass[12pt,twoside]{article}
\usepackage{amsmath,amssymb,amsthm,euscript,color}

\usepackage{graphicx}

\textheight 9in \topmargin -0.5in \oddsidemargin 0.25in
\textwidth 6in \evensidemargin \oddsidemargin
\marginparsep 3pt \marginparwidth 67pt

%
%
\numberwithin{equation}{section}
\newtheorem{theorem}{Theorem}[section]

\newtheorem{lemma}[theorem]{Lemma}
\newtheorem{proposition}[theorem]{Proposition}


%
%
%


%
%
%
\def\al{\alpha}
\def\be{\beta}
\def\ga{\gamma}
\def\de{\delta}





\def\om{\omega}


%
%
%
\newcommand{\R}{\mathbb{R}}

\renewcommand{\Re} {\mathop{\mathrm{Re}}}

%
%
%
%

\begin{document}

\title{Oscillation and Instability in Chemical Reactions}
\author{Jinghua Yao$^{\dagger}$, Xiaoyan Wang$^{\ddagger}$}
\date{October 1, 2015}
\maketitle

\begin{abstract}
We prove that the famous diffusive Brusselator model can support more complicated
spatial-temporal wave structure than the usual temporal-oscillation from a standard Hopf bifurcation.  In our investigation, we discover that the diffusion term in the model is neither a usual parabolic stabilizer nor a destabilizer as in the Turing instability of uniform state, but rather plays the role of maintaining an equivariant Hopf bifurcation spectral mechanism. At the same time, we show that such a mechanism can occur around any nonzero wave number and this finding is also different from the former works where oscillations caused by diffusion can cause the growth of wave structure only at a particular wavelength. Our analysis also demonstrates that the complicated spatial-temporal oscillation is not solely driven by the inhomogeneity of the reactants.
\end{abstract}

\thanks{\textbf{Keywords}}: chemical reaction oscillation, chemical reaction instability, symmetry, spectrum, sectorial operator.\\

\thanks{\textbf{MSC}(2010): 35K, 35Q, 37G.}

\section{Introduction}

We study in this paper the oscillations and instabilities in reaction-diffusion chemical systems from uniform states by rigorous mathematical analysis. 

The investigation of reaction-diffusion system that forms a pattern from uniform state dates back to the famous 1952 paper of A. Turing \cite{Tu}. In this far-reaching work \cite{Tu}, Turing did careful linear stability analysis for a reaction-diffusion system with two interacting chemicals. The analysis in \cite{Tu} had led to several important insights. One of the most surprising insight is that diffusion in a reacting chemical system can actually be a destabilizing factor that leads to instability. This is contrary to 
the intuition that diffusion smooths out spatial variations of a concentration field. A second insight is that the instability caused by diffusion can cause the growth of wave structure at a particular wavelength (or equivalently at a particular wave number). Almost at the same time as Turing, B. P. Belousov was observing oscillating chemical reactions in his laboratory. However, around the middle of last century, the chemical oscillations were thought to be inconsistent with the commonly accepted fact that a closed system of mixed chemicals must relax to equilibrium monotonically. Therefore, Belousov's work was not quickly appreciated. It was only until A. M. Zhabotinsky's systematic studies on spatially uniform oscillations that people began to realize the possibility of the chemical oscillations. During the period from late 60s to mid 70s, the works of G. Nicolis, R. Lefever, I. Prigogine and their coworkers (see \cite{NP, PL1, PL2, HN, LNH, Le} and the references therein) greatly enhanced our understanding of instability and oscillation phenomena in purely dissipative system involving chemical reactions and diffusions. Since then, the studies of chemical oscillation and instabilities have been attracting more and more attention till today in both 
the chemical physics community and mathematical community.

Motivated by the above mentioned developments, we study the so-called Brusselator model in chemical reaction and show that this model supports more complicated oscillation wave structures than those previously identified. More precisely, we are able to show that there are at least two families of standing waves and a torus of standing waves bifurcated from a uniform state for the Brusselator model in our setting. We also discover that the diffusion in our study plays the role of maintaining an $O(2)$-Hopf bifurcation mechanism and it is not a destabilizer in the sense of Turing. Meanwhile, the bifurcated wave structure can happen at any nonzero wave number. Our analysis puts the existence of oscillation waves in the chemical reaction systems on a solid theoretical foundation, complementing the former studies in \cite{Tu, PL2} which are essentially linear stability analysis from mathematical point of view.

Now we describe the model we investigate, i.e., the Brusselator model. The Brusselator model, also known as the trimolecular model, is a very famous model
for the study of cooperative processes in chemical kinetics. It is associated with the following chemical reaction system \cite{NP, PL1, PL2, Ty, Le}

\begin{equation}
\label{1a}
B_{in}\rightarrow X
\end{equation}
\begin{equation}
\label{1b}
A_{in}+X\rightarrow Y + D
\end{equation}
\begin{equation}
\label{1c}
2X+Y\rightarrow 3X
\end{equation}
\begin{equation}
\label{1d}
X\rightarrow E
\end{equation}
in which $A_{in}$ and $B_{in}$ are reactants, $D$ and $E$ are products or output chemicals while $X$ and $Y$ are intermediates.

Let $[Q]$ be the concentration of a chemical $Q$. Then the partial differential equations governing the evolution process, i.e., changes of $[X]$ and $[Y]$, are given by the following mathematical system \cite{KP, Ad, PL1, PL2, Ty} in the one spatial dimensional setting

\begin{equation}\label{reaction}
\begin{cases}
\partial_t [X]=k_1[B_{in}]-k_2[A_{in}][X]+k_3[X]^2[Y]-k_4[X]+D_1\partial^2_x[X],\\
\partial_t [Y]=k_2[A_{in}][X]-k_3[X]^2[Y]+D_2\partial^2_x[Y].
\end{cases}
\end{equation}

The positive constants $k_i$ ($1\leq i\leq 4$) are reaction rates in the four steps \eqref{1a}-\eqref{1d} and the positive constants $D_1$ and $D_2$ are diffusion rates of $X$ and $Y$ respectively. The evolution of $[X]$ is caused by the joint effects of creation in \eqref{1a} corresponing to the term $k_1[B_{in}]$, annihilation in \eqref{1b} corresponding to the term
$-k_2[A_{in}][X]$, creation in \eqref{1c} corresponding to the term $k_3[X]^2[Y]$, annihilation in \eqref{1d} corresponding to the term $-k_4[X]$, and diffusion corresponding to the term $D_1\partial^2_x[X]$ respectively. This is the first equation in \eqref{reaction}. The second equation in \eqref{reaction} can be interpreted similarly.

Introducing the notations $U_1=U_1(x,t):=[X]$, $U_2=U_2(x,t):=[Y]$ and controlling concentratins $[A_{in}]$ and $[A_{in}]$ so that $[A_{in}]\equiv A$ and $[B_{in}]\equiv B$,  we obtain the following system of partial differential equations

\begin{equation}\label{reaction1}
\begin{cases}
\partial_t U_1=k_1B-k_2AU_1+k_3U_1^2U_2-k_4U_1+D_1\partial^2_xU_1,\\
\partial_t U_2=k_2AU_1-k_3U_1^2U_2+D_2\partial^2_xU_2.
\end{cases}
\end{equation}
We can perform a nondimensionalization procedure for the above system by setting
$$u_1:=\Big(\frac{k_3}{k_4}\Big)^{1/2}U_1, u_2:=\Big(\frac{k_3}{k_4}\Big)^{1/2}U_2, \alpha:=\Big(\frac{k_3}{k_4}\Big)^{1/2}\frac{k_1B}{k_4}, $$
$$\beta:=\frac{k_2A}{k_4}, \, \bar t:=k_4 t, \, \delta_1:=D_1/k_4,\, \delta_2:=D_2/k_4,$$
and obtain a system on $(u_1, u_2)^T$ which takes on the following form after dropping the bar in $\bar{t}$:

\begin{equation}\label{Brusselator0}
\begin{cases}
\partial_t u_1=\al-\be u_1+u_1^2 u_2-u_1+\delta_1\partial_x^2 u_1,\\
\partial_t u_2=\be u_1 -u_1^2u_2+\delta_2\partial_x^2 u_2.
\end{cases}
\end{equation}

We also remark that a most mathematical convenient way to obtain the form of \eqref{Brusselator0} from
\eqref{reaction1} is simply setting $k_2, k_3$ and $k_4$ as unity and making the following identifications:
$$U_1\rightarrow u_1, U_2\rightarrow u_2, \, A\rightarrow \be,\, k_1 B\rightarrow \al,\, D_1\rightarrow \de_1, \,D_2\rightarrow \de_2.$$
See also the treatment in \cite{PL2}.

In the current work, we rigorously prove that the chemical mechanism \eqref{1a}-\eqref{1d}
supports more complicated oscillations than those oscillations identified in the previous works by showing here that
 \eqref{reaction1} bifurcates both standing waves and rotating waves when the parameter $\be$ varies around some specific values. Meanwhile, we verify that these spatial-temporal oscillations are not induced by the inhomogeneity of $u_1$ and $u_2$ during the chemical reaction process.

Rearranging \eqref{Brusselator0}, we obtain the following system
\begin{equation}
\label{Brusselator}
\begin{cases}
\partial_t u_1=\delta_1\partial_x^2 u_1 -(\be+1)u_1+u_1^2 u_2+\al,\\
\partial_t u_2=\delta_2\partial_x^2 u_2+\be u_1 -u_1^2u_2,
\end{cases}
\end{equation}
in which $\de_1,\de_2, \al$ and $\be$ are positive parameters and $u=(u_1, u_2)^T$ is a function of space and time.

We will assume $x\in [-\pi, \pi]$ and assume the state variable $u$ satisfies periodic boundary condition. Here the choice of a spatial length $2\pi$ is nonessential. Actually $\pi$ can be replaced by any positive number. For the choice of periodic boundary condition, we follow \cite{GG} which is a natural boundary condition (see Page 100 of \cite{GG}). In other words, we consider the system \eqref{Brusselator} in the spatial domain $\mathbb T:=\R/[-\pi, \pi]$.

Assume $\be_1=1+\al^2+\de_1+\de_2$ and $\mu=\be-\be_1$. Our main result is the following theorem. 

\begin{theorem}\label{thm} 
System \eqref{Brusselator} can maintain an $O(2)$-Hopf bifurcation around the uniform state $(\al, \frac{\be_1}{\al})$ in the Hilbert space $H^1(\mathbb T)\times H^1(\mathbb T)$ when $\be$ varies around $\be_1$. There are two families of bifurcated rotating waves and a torus of bifurcated standing waves. The above wave structure can occur around any nonzero wave number. Moreover, the mean-zero perturbations of the state variable from the corresponding uniform state do not support such an oscillation wave structure.
\end{theorem}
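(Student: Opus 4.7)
My plan is to cast \eqref{Brusselator} as a semilinear evolution equation on $X := H^1(\mathbb T)\times H^1(\mathbb T)$, diagonalize the linearization around the uniform state via Fourier series in $x$, exhibit the $O(2)$-equivariance of the vector field, and then apply the $O(2)$-equivariant Hopf bifurcation theorem of Golubitsky--Stewart--Schaeffer on a four dimensional center manifold. Writing $u_1=\al+v_1$, $u_2=\be/\al+v_2$, the linearization at $\be$ is $L_\be=\operatorname{diag}(\de_1\pd_x^2,\de_2\pd_x^2)+J_\be$ with $J_\be=\bmx{\be-1 & \al^2 \\ -\be & -\al^2}$. On the Fourier basis $\{e^{ikx}\}_{k\in\Z}$, $L_\be$ is block diagonal with $2\times 2$ blocks $L_{k,\be}=-k^2\operatorname{diag}(\de_1,\de_2)+J_\be$ whose invariants are
\begin{equation*}
\operatorname{tr}L_{k,\be}=\be-1-\al^2-(\de_1+\de_2)k^2,\quad \det L_{k,\be}=\de_1\de_2k^4+(\de_1\al^2-(\be-1)\de_2)k^2+\al^2.
\end{equation*}
Fixing $k_0\in\N_{>0}$ and setting $\be_{k_0}:=1+\al^2+(\de_1+\de_2)k_0^2$ (so the theorem's $\be_1$ is $\be_{\pm 1}$), one has $\operatorname{tr}L_{\pm k_0,\be_{k_0}}=0$ and $\det L_{\pm k_0,\be_{k_0}}=:\om_0^2>0$ in the generic parameter regime, yielding simple purely imaginary eigenvalues $\pm i\om_0$ at $\be=\be_{k_0}$, while transversality $\pd_\be\Re\la_{\pm k_0}(\be_{k_0})=\tfrac12$ is immediate from the trace formula. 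Non-resonance follows from $\operatorname{tr}L_{k,\be_{k_0}}=(\de_1+\de_2)(k_0^2-k^2)\neq 0$ for $|k|\neq k_0$; in particular $L_{0,\be_{k_0}}$ has spectrum strictly in the right half plane ($\operatorname{tr}=(\de_1+\de_2)k_0^2>0$, $\det=\al^2>0$). This identifies a four dimensional center subspace
\begin{equation*}
E^c=\operatorname{span}_{\R}\bigl\{\Re(e^{ik_0 x}\bphi),\ \Im(e^{ik_0 x}\bphi),\ \Re(e^{-ik_0 x}\overline{\bphi}),\ \Im(e^{-ik_0 x}\overline{\bphi})\bigr\},
\end{equation*}
where $\bphi\in\C^2$ spans $\ker(L_{k_0,\be_{k_0}}-i\om_0 I)$.

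The second step is to identify the $O(2)\times S^1$ representation on $E^c$. The group $O(2)=SO(2)\rtimes\Z_2$ acts on $X$ by translation $T_\th v(x)=v(x-\th)$ and reflection $Rv(x)=v(-x)$; since the nonlinearity $u_1^2u_2$ is pointwise in $x$ and $\pd_x^2$ is translation/reflection invariant, the vector field in \eqref{Brusselator} is $O(2)$-equivariant. Since $L_\be$ is sectorial with compact resolvent on $X$, standard parabolic invariant manifold theory gives a smooth $O(2)$-equivariant center manifold tangent to $E^c$. In coordinates $(\xi,\eta)\in\C^2$ on $E^c$ associated to the $e^{ik_0 x}$ and $e^{-ik_0 x}$ blocks, translation acts as $T_\th:(\xi,\eta)\mapsto(e^{ik_0\th}\xi,e^{-ik_0\th}\eta)$, reflection as $R:(\xi,\eta)\mapsto(\eta,\xi)$, and the temporal Hopf phase $S^1$ as the diagonal $(\xi,\eta)\mapsto(e^{i\vp}\xi,e^{i\vp}\eta)$; this is precisely the standard irreducible representation for $O(2)$-equivariant Hopf bifurcation.

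The $O(2)$-equivariant Hopf theorem then yields two maximal-isotropy branches of small amplitude periodic orbits bifurcating from $v=0$ as $\mu=\be-\be_{k_0}$ crosses zero: a family of \emph{rotating waves} with isotropy $\widetilde{SO(2)}=\{(\th,\vp):\vp=\pm k_0\th\}$, giving the two families of rotating waves claimed in Theorem~\ref{thm}; and a family of \emph{standing waves} with isotropy $\Z_2\subset O(2)$, whose $O(2)\times S^1$ group orbit is a $2$-torus. The same argument applies for every $k_0\in\N_{>0}$ with $\be$ varied around $\be_{k_0}$, proving the any-nonzero-wave-number claim. The mean-zero statement follows from the center manifold reduction: at quadratic order in the wave amplitude, interaction of the $e^{\pm ik_0 x}$ modes through the Brusselator quadratic terms produces nontrivial contributions to the $k=0$ and $k=\pm 2k_0$ Fourier modes, so the bifurcated periodic orbits carry a nonzero spatial mean at $O(\varepsilon^2)$ and therefore do not lie in the subspace of mean-zero perturbations, consistent with the hyperbolicity of $L_{0,\be_{k_0}}$ established above.

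\textbf{Main obstacle.} The heart of the argument is the computation and sign control of the cubic coefficients in the $O(2)$-equivariant Hopf normal form. In coordinates $(\xi,\eta)$ the reduced vector field reads
\begin{equation*}
\dot\xi=(\mu+i\om_0)\xi+a|\xi|^2\xi+b|\eta|^2\xi+\cdots,\quad \dot\eta=(\mu+i\om_0)\eta+a|\eta|^2\eta+b|\xi|^2\eta+\cdots,
\end{equation*}
with $a,b\in\C$; the signs of $\Re a$, $\Re b$ and $\Re(a-b)$ determine sub/supercriticality and which branch is stable on the center manifold. Computing $a,b$ requires a second-order center manifold reduction through the $k=0$ and $k=\pm 2k_0$ Fourier modes followed by projection onto $E^c$; this is a delicate but finite algebraic calculation, specific to the Brusselator quadratic terms $v_1^2$, $v_1v_2$ and cubic term $v_1^2 v_2$, with nondegeneracy expected generically in $(\al,\de_1,\de_2)$ for each $k_0$. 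Uniform non-resonance in $k\in\Z$, the secondary obstacle, is already handled by the trace/determinant inequalities above.
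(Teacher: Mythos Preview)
Your proposal is correct and tracks the paper's architecture closely: Fourier block-diagonalization of the linearization, the trace/determinant analysis giving a purely imaginary pair at $k=\pm k_0$ when $\be=\be_{k_0}$ with transversal crossing $\pd_\be\Re\la=\tfrac12$, the $O(2)$-equivariance of the vector field, sectoriality and an equivariant center-manifold reduction to a four-dimensional system, and the mean-zero obstruction coming from the $k=0$ Fourier component forced by quadratic self-interaction of the $e^{\pm ik_0 x}$ modes. The paper also imposes the standing assumption $\be_{k_0}<(1+\al\sqrt{\de_1/\de_2})^2$ to guarantee $\det L_{k,\be_{k_0}}>0$ for all $k$, which is the precise form of what you call ``the generic parameter regime''.

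The one methodological difference worth noting is how the branches are produced. You invoke the Golubitsky--Stewart--Schaeffer equivariant Hopf theorem, which delivers a periodic branch for each maximal isotropy subgroup with two-dimensional fixed-point space from the eigenvalue crossing alone; the cubic coefficients $a,b$ in your normal form are then needed only for direction and stability. The paper instead derives the $O(2)$-normal form $\dot z_j=i\om z_j+z_j(a\mu+b|z_j|^2+c|z_{3-j}|^2)+\cdots$ by hand from the characteristic condition and the symmetry, and reads off the rotating and standing waves from the truncated radial equations in polar coordinates; in that route existence already requires $\Re b\neq 0$ and $\Re(b+c)\neq 0$, which the paper secures by a lengthy explicit computation of $b$ and $c$ (its Section~2.4 and Proposition~2.4) followed by a numerical check at $\de_1=\de_2=1$, $\al=2$. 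Your route is cleaner for bare existence; the paper's buys the explicit Landau coefficients and hence the stability diagram. The computation you flag as the ``main obstacle'' is exactly what the paper carries out.
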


Let $\bar z$, $\mbox{Re}\, z$ and $\mbox{Im}\, z$ be the complex conjugate, real part and imaginary part of a complex number $z$ respectively. The following theorems explain the dynamics of the system \eqref{Brusselator} given by Theorem \ref{thm} more precisely.

\begin{theorem}
System \eqref{Brusselator} admits a center manifold reduction with $O(2)$ symmetry near $\mu=0$.  If the center space is parametrized by 
$z_1\xi_1 +z_2\xi_2 +\bar z_1 \bar \xi_1 +\bar z_2\xi_2$ with $z_1,z_2\in\mathbb C^1$
and
\begin{equation}
\om=\sqrt{\al^2(1+\de_1-\de_2)-\de_2^2},\, \xi_1=e^{ix}\begin{pmatrix}1\\\frac{-\al^2-\de_2+i\omega}{\al^2} \end{pmatrix},\, \xi_2=e^{-ix}\begin{pmatrix}1\\\frac{-\al^2-\de_2+i\omega}{\al^2} \end{pmatrix},
\end{equation}
then the dynamics on the center manifolds has the following form
\begin{equation}\label{O2 result}
\begin{cases}
\frac{d}{dt}z_1=i\omega z_1+ z_1 P(|z_1|^2, |z_2|^2,\mu)+\zeta(z_1, z_2, \bar z_1, \bar z_2, \mu),\\
\frac{d}{dt}z_2=i\omega z_1+ z_2 P(|z_2|^2, |z_1|^2,\mu)+\zeta(z_2, z_1, \bar z_2, \bar z_1, \mu),
\end{cases}
\end{equation}
in which $P$ is a polynomial of degree $p\in \mathbb N$ in its first two arguments with coefficients
depending on $\mu$, and has the form $P_{\mu}(|z_1|^2, |z_2|^2)=a\mu+b|z_1|^2+c|z_2|^2+h.o.t$
where $a$, $b$, $c$ are given by Proposition \ref{proposition abc} and $\zeta(z_1, z_2, \bar z_1, \bar z_2, \mu)=O((|z_1|+|z_2|)^{2p+3})$ satisfies
$$\zeta(e^{i\phi}z_1, e^{i\phi}z_2, e^{-i\phi}\bar z_1,e^{-i\phi} \bar z_2, \mu)=e^{i\phi}\zeta(z_1, z_2, \bar z_1, \bar z_2, \mu).$$
\end{theorem}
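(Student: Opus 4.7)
The plan is to execute a parameter-dependent, equivariant center-manifold reduction followed by an $O(2)$-symmetric Birkhoff normal form. First I would recast \eqref{Brusselator} as an abstract semilinear evolution equation $\dot u = L_\mu u + N(u)$ on $X = H^1(\mathbb T)\times H^1(\mathbb T)$, where $L_\mu$ is the linearization at $(\alpha,\beta_1/\alpha)$ and $N$ gathers the smooth polynomial nonlinearity from $u_1^2 u_2$. The diagonal diffusion operator $\mathrm{diag}(\delta_1,\delta_2)\partial_x^2$ on $\mathbb T$ is sectorial, and $N$ is entire, so the parameter-dependent center-manifold theorem (Vanderbauwhede--Iooss, or Haragus--Iooss) applies once the spectrum of $L_0$ is in hand. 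Diagonalizing via Fourier series, $L_0$ acts on the $n$-th mode $e^{inx}$ as an explicit $2\times 2$ matrix; for the wave number targeted by $\xi_1,\xi_2$ its trace vanishes and its determinant equals $\omega^2>0$, while the other modes contribute eigenvalues uniformly bounded away from the imaginary axis. This yields a spectral gap and identifies the four-real-dimensional center subspace $E_c = \mathrm{span}_{\mathbb R}\{\xi_1,\xi_2,\bar\xi_1,\bar\xi_2\}$, and the theorem produces a smooth invariant center manifold $\mathcal M_\mu$ tangent to $E_c$, on which the PDE reduces to a smooth finite-dimensional ODE.

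Second, the two PDE symmetries $T_\phi:u(x)\mapsto u(x+\phi)$ and $R:u(x)\mapsto u(-x)$ commute with both $L_\mu$ and $N$ (they act only in the spatial variable, not on components), and together generate $O(2)$. Their induced actions on $E_c$ are $T_\phi:(z_1,z_2)\mapsto(e^{i\phi}z_1,e^{-i\phi}z_2)$ and $R:(z_1,z_2)\mapsto(z_2,z_1)$, where the choice of the same internal vector for $\xi_1$ and $\xi_2$ is precisely what makes $R\xi_1=\xi_2$. By the equivariant version of the center-manifold theorem, $\mathcal M_\mu$ is $O(2)$-invariant and the reduced vector field is $O(2)$-equivariant. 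Applying the $O(2)$-equivariant Hopf normal form procedure order by order removes every monomial not commuting with the semisimple linear part, which on $E_c$ is $\mathrm{diag}(i\omega,i\omega,-i\omega,-i\omega)$; hence the normal form truncated at any prescribed order $2p+1$ gains the extra phase equivariance $(z_1,z_2)\mapsto(e^{i\phi}z_1,e^{i\phi}z_2)$. Invariant theory for the resulting $O(2)\times S^1$ action on $\mathbb C^2$, with basic invariants $\mu,\ |z_1|^2+|z_2|^2,\ |z_1|^2|z_2|^2$ and basic equivariants $(z_1,0),(0,z_2)$, forces each equation into the shape $\dot z_j = i\omega z_j + z_j P_j(\mu,|z_1|^2,|z_2|^2)$. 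The reflection $R$ then requires $P_2(\mu,a,b)=P_1(\mu,b,a)$, yielding the single polynomial $P$ of degree $p$ as written in \eqref{O2 result}. The remainder $\zeta$ inherits the displayed $S^1$-phase equivariance automatically from the normal form transformation, and the order $(2p+3)$ bound comes from the fact that the next $S^1$-equivariant monomials not already absorbed into $P$ occur at degree $2p+3$.

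The main obstacle is the careful bookkeeping that identifies the $O(2)\times S^1$ equivariance precisely on the concrete basis $\{\xi_1,\xi_2,\bar\xi_1,\bar\xi_2\}$ and confirms that the near-identity changes of variables used at each Birkhoff step preserve that equivariance; this is where the explicit shared internal component of $\xi_1$ and $\xi_2$ is essential, and it is also what allows the polynomial $P$ in the two equations to be literally the same after the swap of arguments. The remaining explicit computation of $a,b,c$ amounts to projecting the linear perturbation $(L_\mu-L_0)$ and the quadratic and cubic parts of $N$ onto $\xi_1$ by means of the dual left eigenvector of $M_1$ associated with $i\omega$, which is the content of Proposition \ref{proposition abc} and need not be redone here.
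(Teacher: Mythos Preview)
Your proposal follows essentially the same route as the paper: Fourier-mode spectral analysis of $\mathbf{L}_{\beta_1}$ to identify the center eigenspace and spectral gap, sectoriality of the diffusion part to invoke the parameter-dependent equivariant center manifold theorem, and then the $O(2)\times S^1$-equivariant normal form reduction to obtain the single polynomial $P$. The only cosmetic difference is that the paper derives the structure of $P$ by hand---plugging specific test values of $t$ and $\phi$ into the characteristic condition $\mathcal N_\mu(e^{t\mathbf L^*}v)=e^{t\mathbf L^*}\mathcal N_\mu(v)$ and the $O(2)$ relations to force $\mathcal N^1_\mu=z_1P(|z_1|^2,|z_2|^2)$ and then using the reflection $S$ to identify $P_1$ with $P_2$---whereas you invoke invariant theory for the $O(2)\times S^1$ action on $\mathbb C^2$ abstractly; the content is identical.
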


To analyze the equivariant Hopf bifurcation dynamics of \eqref{O2 result}, it is sufficient to consider the third order truncated system. The truncated system of \eqref{O2 result} at order three has the following form
\begin{equation}\label{normal}
\begin{cases}
\frac{d}{dt}z_1=i\omega z_1+ z_1(a \mu+b |z_1|^2+c |z_2|^2),\\
\frac{d}{dt}z_2=i\omega z_2+z_2(a\mu+b|z_2|^2+c|z_1|^2),
\end{cases}
\end{equation}
which can be written as follows
\begin{equation}\label{normalMK}
\begin{cases}
\frac{d}{dt}z_1=(a\mu+i\omega) z_1+ cz_1( |z_1|^2+ |z_2|^2)+(b-c)|z_1^2|z_1,\\
\frac{d}{dt}z_2=(a\mu+i\omega) z_2+ cz_2( |z_1|^2+ |z_2|^2)+(b-c)|z_2^2|z_2.
\end{cases}
\end{equation}
The latter, i.e., \eqref{normalMK}, is in accordance with that of \cite{MK} by the following correspondence
\begin{equation}\label{correspondance}
a\mu\rightarrow\lambda,\, c\rightarrow A,\, b-c\rightarrow B.
\end{equation}
A sufficient condition for \eqref {normalMK} to support both bifurcating standing waves and bifurcating rotating waves (see \cite{MK})
consists of the following relations
\begin{equation}\label{S1}
\mbox{Re}\, B\neq 0, \, \mbox{Re}\, A+\mbox{Re}\, B\neq 0,\, 2\mbox{Re}\, A+\mbox{Re}\, B\neq 0.
\end{equation}
Concerning the stability of these bifurcated waves, we have the following schematic graph from \cite{MK}:
\begin{center}
\includegraphics[scale=0.5]{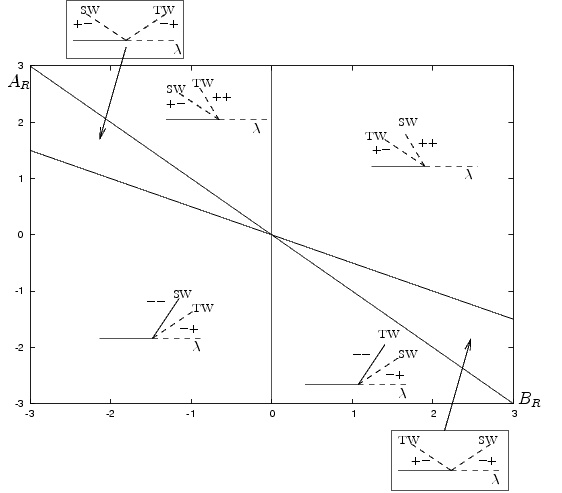}
\end{center}
In the graph, ``$A_R$" and ``$B_R$" means the real parts of $A$ and $B$ respectively. ``TW" represents rotating wave and ``SW" represents standing wave. The two degeneracy (in the sense of \eqref{S1}) lines in the box are given by the equations $\mbox{Re}\, A+\mbox{Re}\, B=0$ and $2\mbox{Re}\, A+\mbox{Re}\, B=0$. The solid (resp., dashed) lines indicate (orbitally) stable (resp., orbitally unstable) of bifurcated solutions. 

Rephrasing these relations in terms of $a, b, c$ for \eqref{normal}, the sufficient condition becomes
\begin{equation}
\Re\, b\not=0,\, \Re\, (b\pm c)\not=0.
\end{equation}
Though the above condition is by no means necessary for the bifurcated nontrivial solutions, we can verify it for
some (hence uncountable many) specific parameters $\de_1$, $\de_2$ and $\al$.
For our purpose, it suffices to show that there are parameter values such that the above condition holds.
Following \cite{PL2}, we choose $\de_1=\de_2=1$, $\al=2$. Then we have $\be_1=7$ and $\omega=\sqrt{3}$ by
\eqref{beta1} and \eqref{omega}. With these set of datum, we obtain
$B_r=18$, $B_i=-33$, $N_r=66$, $N_i=12$, $\mbox{Re}\, b=-\frac{17}{8}$.
We also get $C_{2r}=-192$, $C_{2i}=72\sqrt{3}$, $P_2(0)=12$, $Q_r=42$,
$Q_i=-20\sqrt{3}$, $\mbox{Re}\, c=\frac{11}{4}$. For expressions of the above quantities, see
Proposition \ref{proposition abc}. With the above choice of $\de_1, \de_2$ and $\al$, we have
$\mbox{Re}\, b<0$ and $\mbox{Re}\, b+\mbox{Re}\, c=\frac{5}{8}>0$. In view of
\eqref{correspondance}, these two relations correspond
to the situation $\mbox{Re}\, A+\mbox{Re}\, B<0$ and $2\mbox{Re}\, A+\mbox{Re}\, B>0$.

\textit{Discussion and Problems.} Let us first comment on the reaction mechanism in the model \eqref{1a}-\eqref{1d}. The trimolecular reaction step, i.e., \eqref{1c}, arises in the formation of ozone by atomic oxygen via a triple collision. It also arises in enzymatic reactions and in plasma and laser physics \cite{Ad, KP, TGC} etc. It has been known \cite{Ad, NP, PL1} since late sixties of last century that the problem of dealing with chemical reactions of systems involving two variable intermediates (in the Brusselator model, $X$ and $Y$) together with certain reactants whose concentrations are assumed to be controlled throughout the reaction process is both significant in the investigation of chemical reaction mechanisms and pragmatic in applications.  As is well-known, a state of homogeneity and equilibrium is quickly reached for most of the chemical reactions. While for the Brusselator model, a distinct and surprising feature is that it supports very complicated oscillations and produces chemical reaction instabilities. Our results are in accordance with the results in
the celebrated works \cite{PL1, PL2} on chemical reaction oscillations and instabilities. As we always
have $\mbox{Re}\, a\neq0$ here, we only need $\mbox{Re}\, b\neq0$ or $\mbox{Re}\, c\neq0$,
or $\mbox{Re}\, b+\mbox{Re}\, c\neq0$ to have spatial-temporal oscillational dynamics. Therefore, we have demonstrated more complicated mechanism for chemical reaction oscillations and instabilities in the current
study. Motivated by our current study, a very intriguing problem would be rigorously analyzing the mechanisms of oscillations and instabilities for the reaction schemes proposed in \cite{PL2} at the nonlinear level. A related question is the construction of reaction schemes which can exhibit equivariant oscillations or instabilities with respect to given isometry groups. In other words, it would be interesting to further investigate the roles of symmetry in chemical reactions.\\

\textbf{Acknowledgments.} J. Yao would like to thank Professor Vladimir Sverak of University of Minnesota (UMN) for his suggestion to add a graph for the dynamics in the current work during Yao's visit to
UMN in April 2015. He also thanks Professor Xiangbing Meng of Obstetrics and Gynecology in University of Iowa for explaining the applications of chemical reaction instabilities from biomedical point of view. The research of X. Wang was
partially supported by ONR grant N00014-15-1-2385, NSF grants DMS-1206438, DMS-1510249, and by the Research
Fund of Indiana University.


\section{Proof of Main Result}

To show our main result, we will carry out a complete equivariant bifurcation analysis. 
We divide this part into subsections to make the argument structure concrete.

\subsection{Onset of bifurcation}\label{Onset of bifurcation}
Our goal in this section is to identify the onset of possible bifurcations. The following analysis is essentially
a linear stability analysis.

Introducing the new variable $v=(v_1, v_2)^T:=(u_1-\al, u_2-\frac{\be}{\al})^T$,
system \eqref{reaction1} can be written in terms of $v$ as

\begin{equation}
\label{perturbation system}
\begin{cases}
\partial_t v_1=\delta_1\partial_x^2 v_1 +(\be-1)v_1+\al^2 v_2+ 2\al v_1 v_2+\frac{\be}{\al}v_1^2+v_1^2 v_2,\\
\partial_t v_2=\delta_2\partial_x^2 v_2-\be v_1 - \al^2 v_2 - 2\al v_1 v_2-\frac{\be}{\al}v_1^2-v_1^2v_2,
\end{cases}
\end{equation}
which can be identified as an operator equation
\begin{equation}
\label{operator equation}
\partial_t v=\bold L_{\be}v+\bold R(v)
\end{equation}
with
$$\bold L_{\be}=\begin{pmatrix} \delta_1\partial^2_x +\be-1 & \al^2\\ -\be & \delta_2\partial^2_x -\al^2\end{pmatrix},$$
$$\bold R(v)=\begin{pmatrix} 2\alpha v_1 v_2 + \frac{\be}{\al}v_1^2+v_1^2 v_2 \\-2\al v_1 v_2 - \frac{\be}{\al}v_1^2-v_1^2 v_2\end{pmatrix}.$$

{\it The Euclidean symmetry}. The equation \eqref{operator equation} is equivariant under the Euclidean group $O(2)$, which can be easily observed due to the form of the equation and the periodic boundary condition. Indeed, define
the group actions $R(\phi)$ and $S$ where $\phi\in \mathbb R^1/2\pi\mathbb Z$ as follows
$$R(\phi)v(x)=v(x+\phi),\, Sv(x)=v(-x).$$
It is obvious that $S^2=\mbox{Id}$. As only second order derivatives and constants are involved in the operator $\mathbf L_{\be}$, we have $[\mathbf L_{\be}, S]=0$.
Also, it is easy to verify that $SR(\phi)=R(-\phi)S$. As there are no derivatives involved in the nonlinear term $\mathbf R(v)$, we also have
$[\mathbf R, S]=0$. We can also easily observe that $[R(\phi), \mathbf L]=0$ and $[R(\phi), \mathbf R]=0$. Therefore, \eqref{operator equation} is equivariant with respect to the Euclidean group $O(2)$.

Now we regard $\mathbf L_{\be}$ as a linear operator on the space $L^2(\mathbb T)\times L^2(\mathbb T)$ with domain 
$H^2(\mathbb T)\times H^2(\mathbb T)$, i.e.,
$$\mathbf L_{\be}: H^2(\mathbb T)\times H^2(\mathbb T) \subset L^2(\mathbb T)\times L^2(\mathbb T)\rightarrow L^2(\mathbb T)\times L^2(\mathbb T).$$
To identify the onset of bifurcation, we study the spectral equation $\mathbf L_{\be}v=\lambda v$ for $\lambda\in\mathbb C^1$.

By Fourier analysis, the spectra of $\mathbf L_{\be}$ are given by the eigenvalues of the
matrices $M_n$ for $n\in\mathbb Z$ where 
$$M_n=\begin{pmatrix}-n^2\de_1+\be-1 & \al^2\\ -\be & -n^2\de_2-\al^2 \end{pmatrix}.$$
This corresponds to seek solutions $v=(v_1, v_2)$ of the form 
$v_1=\sum_{n\in \mathbb Z}v_1^{(n)}e^{in\frac{2\pi}{2\pi}x}=\sum_{n\in \mathbb Z}v_1^{(n)}e^{inx}$ and 
$v_2=\sum_{n\in \mathbb Z}v_2^{(n)}e^{in\frac{2\pi}{2\pi}x}=\sum_{n\in \mathbb Z}v_2^{(n)}e^{inx}$ where $v_1^{(n)}$ and $v_2^{(n)}$ are complex numbers. Therefore, the
matrices $M_n$ have been indexed by wave numbers $n$.

The characteristic equations of $M_n$ are 
\begin{equation}\label{Pn}
P_n(\lambda,\be)=\lambda^2+(\be(n)-\be)\lambda+\ga(n)-n^2\de_2\be
\end{equation}
with
$$\be(n)=1+\al^2+n^2(\de_1+\de_2),$$
$$\ga(n)=n^2\de_2+n^2\de_1\al^2+n^4\de_1\de_2+\al^2.$$

We consider the dynamics of the system \eqref{Brusselator} when $\be$ varies around $\be(1)$. For convenience, we will use the following notation
\begin{equation}\label{beta1}
\be_1:=\be(1)=1+\al^2+\de_1 +\de_2.
\end{equation}

When the wave number $n=0$, we have
$$P_0(\lambda, \be_1)=\lambda^2+(1+\al^2-\beta_1)\lambda+\al^2=\lambda^2-(\de_1+\de_2)\lambda+\al^2.$$
Therefore, $P_0(\lambda, \be_1)$ has two roots with positive real parts (either a pair of complex conjugate roots or two positive real roots).

Before we consider the contribution of nonzero wave numbers $n$, we remark the spectral analysis around the zero wave number has the following important consequence:

\begin{proposition}
There is no Turing instability for the uniform state $(\alpha, \frac{\beta_1}{\alpha})$, i.e., the diffusion terms are not a destabilizer in the sense of Turing.
\end{proposition}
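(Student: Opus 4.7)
The plan is to unpack what Turing instability formally requires and show that the very first of its requirements already fails at $\be=\be_1$, so that diffusion cannot be held responsible for any instability of the uniform state $(\alpha,\be_1/\alpha)$.

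Recall that a Turing destabilization of a spatially homogeneous equilibrium demands two things simultaneously: (i) linear stability of the reaction kinetics alone, i.e. the zero wave number block $M_0$ has both eigenvalues in the open left half plane, and (ii) linear instability of at least one nonzero wave number block $M_n$, so that some eigenvalue crosses into the right half plane purely because of the diffusion contribution. The approach is therefore to rule out (i) directly, which is sufficient to conclude that no Turing mechanism operates.

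First I would simply specialize the formulas already derived in Section 2.1 to $n=0$ and $\be=\be_1$. The paper has already computed
\begin{equation*}
P_0(\lambda,\be_1)=\lambda^2-(\de_1+\de_2)\lambda+\al^2,
\end{equation*}
so the trace of $M_0$ is $\de_1+\de_2>0$ and its determinant is $\al^2>0$. By Vieta, the two roots are either a conjugate pair with strictly positive real part or two positive reals; in either case both lie in the open right half plane. Thus the reaction kinetics alone, with the diffusion terms switched off, already has the uniform state as a linearly unstable equilibrium at $\be=\be_1$, so requirement (i) fails and the Turing scenario is inapplicable by definition.

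The second step is just an interpretive remark to make the conclusion of the proposition precise: since $(\alpha,\be_1/\alpha)$ is unstable without diffusion, any eigenvalue with positive real part that shows up in some $M_n$ for $n\neq 0$ cannot be attributed to the diffusion terms; in fact, on the contrary, one sees from $\mathrm{tr}\,M_n=(\de_1+\de_2)(1-n^2)$ at $\be=\be_1$ that the diffusion tends to pull spectrum leftward as $|n|$ grows. There is no obstacle of substance here; the only thing to be careful about is the bookkeeping of which matrix entries carry the minus sign from $-n^2$ versus the trace sign convention, and the fact that the proposition is a statement about the reference state at $\be=\be_1$ rather than a global claim over all $\be$.
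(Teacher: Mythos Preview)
Your argument is correct and is essentially the same as the paper's: both recall that Turing instability requires the uniform state to be linearly stable in the absence of diffusion, and both observe that $P_0(\lambda,\be_1)=\lambda^2-(\de_1+\de_2)\lambda+\al^2$ has two roots with positive real parts, so this prerequisite already fails. Your additional remark about $\mathrm{tr}\,M_n=(\de_1+\de_2)(1-n^2)$ is not in the paper's proof but is a harmless and accurate embellishment.
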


\begin{proof}
By the definition of Turing instability for a uniform state, a Turing unstable uniform state should be spectrally stable without diffusion and becomes unstable when diffusion effects are taken into account. If there were no diffusion terms, the stability of uniform state here is determined by the spectra of $\mathbf L_{\be_1}$ contributed by the wave number $n=0$. However, $P_0(\lambda, \be_1)$ has two roots with positive real parts. Therefore, the conclusion of the proposition follows.
\end{proof}

When the wave number $n\not=0$, we first notice that $\be(\pm n)=\be(|n|)$ and the latter is strictly increasing as a function of $|n|\in \mathbb N$. By elementary inequality, we know that 
\begin{align*}
\ga(n)-n^2\de_2\be&=n^2\de_2(1+\al^2\frac{\de_1}{\de_2}+n^2\de_1+\frac{\al^2}{n^2\de_2}-\be)\\
&\geq n^2\de_2(1+\al^2\frac{\de_1}{\de_2}+2\sqrt{n^2\de_1\frac{\al^2}{n^2\de_2}}-\be)\\
&= n^2\de_2\Big((1+\al\sqrt{\de_1/\de_2})^2-\be\Big).\\
\end{align*}
Notice that the quantity in the bracket of last line does not depend on the wave number $n$.

In the following, we assume 
\begin{equation}\label{assumption}
\be_1<(1+\al\sqrt{\de_1/\de_2})^2.
\end{equation}
Then we know $\ga(n)-n^2\de_2\be_1>0$ uniformly in $n\in \mathbb Z$. In particular, when $n=\pm 1$, we have
$$\de_2(1+\al^2\frac{\de_1}{\de_2}+\de_1+\frac{\al^2}{\de_2}-\be_1)=\al^2(1+\de_1-\de_2)-\de_2^2>0.$$

When $\be=\be_1$, both $P_{1}(\lambda,\be_1)$ and $P_{-1}(\lambda,\be_1)$ have the same pair of conjugate
pure imaginary roots $\pm i\omega$ with $\omega>0$ given below
\begin{equation}\label{omega}
\omega^2=\al^2(1+\de_1-\de_2)-\de_2^2.
\end{equation}

When $\be=\be_1$, $P_n(\lambda,\be_1)$ have no roots lying on the imaginary axis in the complex plane for any wave number $n$ with $|n|\geq 2$. This can be easily observed by checking the coefficients of $\lambda$ in the polynomials $P_n(\lambda, \be_1)$. 

For convenience, we will denote $P_n(\lambda,\be_1)$ by $P_n(\lambda)$ from now on. To sum up the analysis above, we have identified the onset of an $O(2)$-Hopf bifurcation. More precisely, the operator $\mathbf L_{\be_1}$ has a pair of pure imaginary eigenvalues $\pm i\om$ when $\be=\be_1$ under the assumption \eqref{assumption}. Due to the symmetry of the system \eqref{Brusselator}, both eigenvalues double and have geometric multiplicity two.

We are in a position to explain the role of diffusion terms for our study now: when applied to a single wave $e^{inx}$ with wave number $n$, the second order operator $\partial_x^2$ acts as multiplication by $-n^2$ on the wave, i.e., $-n^2e^{inx}$; while for a nonzero wave number $n$, we have $n\neq -n$, hence $e^{inx}$ and $e^{-inx}$ produce independent single waves. Therefore, it is the diffusion in system \eqref{Brusselator} that makes the $O(2)$-Hopf bifurcation spectral scenario possible. It is important to notice this mechanism of diffusion here.

Examining the above analysis, we find that the $O(2)$-Hopf bifurcation mechanism can be maintained around any nonzero wave number $n$. This is also in contrast to the situation in the classical work of Turing where the instability caused by diffusion can cause the growth of wave structure at a particular wavelength (or equivalently at a particular wave number), i.e., the second insight mentioned in the introduction.

\subsection{Spectral Analysis}

In this part, we will compute the center space of the operator $\mathbf L_{\be_1}$ and prove a resolvent estimate which we will need to show the existence of center manifolds later.

Now we proceed to compute the center space of $\mathbf L_{\be_1}$ in the space $H^2(\mathbb T)\times H^2(\mathbb T)$ or equivalently $L^2(\mathbb T)\times L^2(\mathbb T)$ because of the finite dimensionality of the center space. Due to group action and conjugacy, this is an easy task. Consider the case when wave number $n=1$. We shall seek solutions of the form $\xi_1=e^{ix}V$ for the eigenvalue $i\omega$ where
$V\in\mathbb C^2$ is a complex vector. The equation $\mathbf L_{\be_1}\xi_1=i\om \xi_1$ reduces to an algebraic equation on the complex vector $V$ given below
\begin{equation}
\begin{pmatrix}-\de_1+\be_1-1-i\om & \al^2\\ -\be_1 & -\de_2-\al^2-i\om\end{pmatrix}V=\begin{pmatrix}0\\0\end{pmatrix}.
\end{equation}
As $\det \begin{pmatrix}-\de_1+\be_1-1-i\om & \al^2\\ -\be_1 & -\de_2-\al^2-i\om\end{pmatrix}=P_1(i\om)=0$, we can choose $V$ as follows

$$V=\begin{pmatrix}1\\\frac{-\al^2-\de_2+i\omega}{\al^2} \end{pmatrix}.$$
The corresponding eigenfunction to $\bold L_{\be_1}$ is
$$\xi_1=e^{ix}\begin{pmatrix}1\\\frac{-\al^2-\de_2+i\omega}{\al^2} \end{pmatrix}.$$


As $[\mathbf L_{\be_1}, S]=0$, we know 

$$\xi_2:=S\xi_1=e^{-ix}\begin{pmatrix}1\\\frac{-\al^2-\de_2+i\omega}{\al^2} \end{pmatrix}$$
is also an eigenfunction of $\mathbf L_{\be_1}$ corresponding to the eigenvalue $i\om$. Indeed, we have
$$\mathbf L_{\be_1}\xi_2=\mathbf L_{\be_1}S\xi_1=S\mathbf L_{\be_1}\xi_1=S(i\om \xi_1)=i\om S\xi_1=i\om\xi_2.$$

By conjugacy, we know that
$$\bar{\xi}_1=e^{-ix}\begin{pmatrix}1\\\frac{-\al^2-\de_2-i\omega}{\al^2} \end{pmatrix},\,\,\bar{\xi_2}=e^{ix}\begin{pmatrix}1\\\frac{-\al^2-\de_2-i\omega}{\al^2} \end{pmatrix}$$ 
are also eigenfunctions of $\mathbf L_{\be_1}$ and they are associated with the eigenvalue $-i\om$.

The indices will be important for the computations in determining the parameters in the bifurcation dynamics. Here $\xi_1$ and $\xi_2$ are eigenfunctions associated with $i\om$. Hence $\bar{\xi_1}$ and $\bar{\xi_2}$ are eigenfunctions associated with $-i\om$. Consequently, $\xi_1$ and $\bar{\xi_2}$ are eigenfunctions associated with the Fourier mode $n=1$ while their conjugates are associated with the Fourier mode $n=-1$.

As we have identified the center space of $\mathbf L_{\be_1}$, we parametrize the center space of $\mathbf L_{\be_1}$ by using complex conjugate coordinates as follows
\begin{equation}\label{center space}
\{z_1\xi_1 +z_2\xi_2 +\bar z_1 \bar \xi_1 +\bar z_2\xi_2\, |\, z_1,z_2\in\mathbb C^1\}.
\end{equation}

For later use, here we also compute a normalized dual eigenfunction of $\xi_1$, i.e., an eigenfunction $\xi_1^{*}$ which is in $\ker (i\omega-\bold L_{\be_1})^{*}$ such that $\langle \xi_1^{*}, \xi_1  \rangle=1$. Noticing the form of $i\om-\bold L_{\be_1}$, i.e., 
$$i\om-\bold L_{\be_1}=i\om \mbox{Id}-\begin{pmatrix} \delta_1\partial^2_x +\be_1-1 & \al^2\\ -\be_1 & \delta_2\partial^2_x -\al^2\end{pmatrix},$$
we shall seek a solution of the form $e^{ix}W$ with the constant vector $W\in\mathbb C^2$. Then the
action of $(i\omega-\bold L_{\be_1})^*$ on such a solution is equivalent to the action of the following matrix to the vector $W$:
$$-i\om I_2-\begin{pmatrix} -\delta_1 +\be_1-1 & -\be_1\\ \al^2 & -\delta_2 -\al^2\end{pmatrix}=-\begin{pmatrix} -\delta_1 +\be_1-1+i\om & -\be_1\\ \al^2 & -\delta_2 -\al^2+i\om\end{pmatrix}.$$
Therefore, $W$ shall parallel to the following vectors
$$\begin{pmatrix} \frac{\de_2+\al^2-i\om}{\al^2} \\ 1\end{pmatrix},\,\,\begin{pmatrix} 1\\ \frac{\de_2+\al^2+i\om}{\be_1}\end{pmatrix}.$$
Notice that the above two vectors are parallel. In view that the second component of $\xi_1$ is more complicated than its first component 1, we could pick the dual eigenfunction $\xi_1^*$ according to the first vector as follows
\begin{equation}\label{xi star}
\xi_1^{*}=i\frac{\al^2}{4\pi\om}e^{ix}\begin{pmatrix} \frac{\de_2+\al^2-i\om}{\al^2} \\ 1\end{pmatrix}.
\end{equation}
With the above choice of $\xi_1^*$, it is easy to check that $\langle \xi_1^{*}, \xi_1  \rangle=1$.


Next, we prove a lemma concerning the behavior of the resolvent of the operator $\mathbf L_{\be_1}$.

\begin{lemma}\label{Resolvent estimate}
 There exists a constant $\om_0>0$, such that $i\lambda\in \rho(\mathbf L_{\be_1})$ for any real number $\lambda$ with
$|\lambda|>\om_0$ and the following resolvent estimate holds:
$$\|(i\lambda-\mathbf L_{\be_1})^{-1}\|_{L^2(\mathbb T)\times L^2(\mathbb T)\rightarrow L^2(\mathbb T)\times L^2(\mathbb T)}\leq \frac{M}{|\lambda|}$$ for some fixed positive constant $M$.
\end{lemma}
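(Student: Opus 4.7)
The plan is to diagonalize $\mathbf L_{\be_1}$ via Fourier series on $\mathbb T$. Expanding $v=\sum_{n\in\mathbb Z}v^{(n)}e^{inx}$ with $v^{(n)}\in\mathbb C^2$, the operator $\mathbf L_{\be_1}$ acts on the $n$-th mode by the $2\times 2$ matrix $M_n$ already introduced in the previous subsection. By Plancherel, the operator norm of the resolvent on $L^2(\mathbb T)\times L^2(\mathbb T)$ equals $\sup_{n\in\mathbb Z}\|(i\la I_2-M_n)^{-1}\|_{\mathbb C^2\to\mathbb C^2}$, so the problem reduces to proving the uniform-in-$n$ estimate $\|(i\la I_2-M_n)^{-1}\|\lesssim 1/|\la|$ for $|\la|>\om_0$.

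Next I would write the resolvent on each mode explicitly. Setting $a_n:=n^2\de_1-\be_1+1$ and $b_n:=n^2\de_2+\al^2$, Cramer's rule gives
$$(i\la I_2-M_n)^{-1}=\frac{1}{P_n(i\la)}\begin{pmatrix} i\la+b_n & \al^2 \\ -\be_1 & i\la+a_n\end{pmatrix},$$
whose numerator has operator norm $\lesssim \max(|\la|,n^2,1)$ with constant depending only on $\al,\de_1,\de_2$. Using $\be_1=1+\al^2+\de_1+\de_2$, a direct check yields the identity $a_n+b_n=(n^2-1)(\de_1+\de_2)$, and consequently
$$|P_n(i\la)|^2=\bigl(\la^2-\ga(n)+n^2\de_2\be_1\bigr)^2+(n^2-1)^2(\de_1+\de_2)^2\la^2.$$

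The estimate then follows by a case split. Fix a sufficiently large constant $K>0$. When $|\la|\geq K n^2$, the first term dominates and $|P_n(i\la)|\gtrsim \la^2$, so $\|(i\la I_2-M_n)^{-1}\|\lesssim |\la|/\la^2=1/|\la|$. When $|\la|\leq K n^2$ and $|n|\geq 2$, the second term dominates: since $(n^2-1)\geq n^2/2$ in this range, one obtains $|P_n(i\la)|\gtrsim n^2|\la|$, hence $\|(i\la I_2-M_n)^{-1}\|\lesssim n^2/(n^2|\la|)=1/|\la|$. The finitely many remaining modes $n=0,\pm 1$ are handled by direct inspection: for $n=0$ the imaginary part $-(\de_1+\de_2)\la$ is nonzero and yields $|P_0(i\la)|^2\gtrsim\la^4$ for $|\la|$ large; for $n=\pm 1$ the imaginary part vanishes identically, while the real part is $-\la^2+\om^2$, which is comparable to $\la^2$ once $|\la|\geq \om_0$ for any chosen $\om_0>\om$.

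The chief obstacle is the resonance at the critical modes $n=\pm 1$: the matrices $M_{\pm 1}$ have eigenvalues exactly $\pm i\om$ on the imaginary axis, so $(i\la I_2-M_{\pm 1})^{-1}$ blows up as $\la\to\pm\om$. This is precisely why the lemma restricts to $|\la|>\om_0$; any fixed threshold $\om_0>\om$ suffices, with the resulting constant $M$ deteriorating as $\om_0\to\om^+$. A secondary subtlety is the competition between the scales $|\la|$ and $n^2$ stemming from the fact that $\ga(n)-n^2\de_2\be_1$ grows like $n^4$ while the symmetrizer-like quantity $a_n+b_n$ grows only like $n^2$; the case split above handles this cleanly and confirms that no further imaginary-axis resonances arise for $|n|\geq 2$.
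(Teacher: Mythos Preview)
Your argument is correct, but it proceeds along a genuinely different route from the paper. The paper establishes the resolvent bound by showing that $-\mathbf L_{\be_1}$ is sectorial: it first proves that each scalar operator $-\de_j\partial_x^2$ is sectorial on $L^2(\mathbb T)$ via a short geometric estimate on $|z-\de_j n^2|$, then invokes standard perturbation theory for sectorial operators (the principal part $\mbox{diag}(-\de_1\partial_x^2,-\de_2\partial_x^2)$ perturbed by a bounded matrix) to conclude. The imaginary-axis estimate then falls out of the sectoriality definition. By contrast, you stay entirely within the Fourier block structure already set up in the paper, compute $|P_n(i\la)|^2$ explicitly using the key identity $a_n+b_n=(n^2-1)(\de_1+\de_2)$, and run a clean scale dichotomy between $|\la|$ and $n^2$. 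Your approach is more elementary and self-contained---no appeal to the perturbation result in \cite{He}---and has the further merit of making the role of the resonance at $n=\pm1$, $\la=\pm\om$ completely transparent (explaining precisely why $\om_0>\om$ is needed). The paper's approach is more robust: it would survive replacing the periodic Laplacian by any sectorial principal part and does not require the explicit algebraic identity you exploit.
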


\begin{proof} First, we claim that the operator
$\mathbf A: H^2(\mathbb T)\subset L^2(\mathbb T)\mapsto L^2(\mathbb T)$	defined by $\mathbf A v=-\de_1\partial_x^2v$ is a sectorial operator. Indeed, the spectral set $\sigma(\mathbf A)$ of the operator $A$ is $\{\de_1 n^2\,| n\in\mathbb Z\}$. In particular, the set $S_{-\de_1, \frac{\pi}{4}}:=\{z\in\mathbb C^1\, | \frac{\pi}{4}\leq |\, arg(z-(-\de_1))|\leq \pi, z\not=-\de_1\}$ is a subset of $\rho(\mathbf A)$. For any $z\in S_{-\de_1, \phi}$, an easy geometic argument yields $|z-\de_1 n^2|\geq (\sin\frac{\pi}{4})|z-(-\de_1)|$ for any $n\in\mathbb Z$. For these $z$, we consider the operator equation $(z-\mathbf A)v=w$. Assume $w=\sum_{n\in\mathbb Z}w(n)e^{inx}$ where $w(n)$'s are the Fourier coefficients of $w$. Then $v$ can be solved explicitly as $v=\sum_{n\in\mathbb Z}v(n)e^{inx}$ with $v(n)=\frac{v(n)}{\lambda-\de_1 n^2}$. By Plancherel Theorem, we obtain 
\begin{align*}
|v|^2_{L^2(\mathbb T)}&=\sum_{n\in\mathbb Z}|v(n)|^2=
\sum_{n\in\mathbb Z}\Big|\frac{w(n)}{\lambda-\de_1 n^2}\Big|^2\\
&\leq\sum_{n\in\mathbb Z}
\frac{1}{(\sin^2\frac{\pi}{4})|z-(-\de_1)|^2}|w(n)|^2\leq \frac{2}{|z-(-\de_1)|^2}|w|^2_{L^2(\mathbb T)},
\end{align*}
which implies the resolvent estimate
$$\|(z-\mathbf A)^{-1}\|_{L^2(\mathbb T)\rightarrow L^2(\mathbb T)}\leq \frac{\sqrt{2}}{|z+\de_1|}.$$
Therefore the claim is true.

Similarly, the operator $\mathbf B: H^2(\mathbb T)\subset L^2(\mathbb T)\mapsto L^2(\mathbb T)$	defined by $\mathbf B v=-\de_2\partial_x^2v$ is also a sectorial operator. Therefore, the operator $\mbox{diag}\{\mathbf A, \mathbf B\}$ is a sectorial operator on $L^2(\mathbb T)\times L^2(\mathbb T)$. By standard perturbation theory of linear operators (see for example p. 19 of \cite{He}), we conclude $-\mathbf L_{\be_1}$ is a sectorial operator. Then there exists a sector $S_{a_0,\phi}$ similary defined as $S_{-\de_1,\frac{\pi}{4}}$ with $a_0<0$ and $0<\phi<\frac{\pi}{2}$ such that
$$\|(z+\mathbf L_{\be_1})^{-1}\|\leq \frac{M}{|z-a_0|},\,\, \mbox{for any}\,z\in S_{a_0,\phi}\,\, \mbox{and some fixed postive constant}\, M.$$
Then for any real $\lambda$ with $|\lambda|\geq \om_0:= 2a_0\tan\phi$, we have $i\lambda\in \rho(\mathbf L_{\be_1})$ and the resolvent estimate
$$
\|(i\lambda+\mathbf L_{\be_1})^{-1}\|\leq \frac{M}{|-i\lambda-a_0|}\leq \frac{ M}{|\lambda|}.
$$
This completes the proof of Lemma \ref{Resolvent estimate}.
\end{proof}

We emphasize that the boundary condition involved in Lemma \ref{Resolvent estimate} is the periodic condition rather than a Dirichlet boundary condition. Our geometric proof demonstrates that the above far field resolvent estimate along the imaginary axis is a robust structural property of the operator $\mathbf L_{\be_1}$.

Next, we will isolate the bifurcation parameter, write the nonlinearity in terms of multilinear maps and show the existence of center manifolds for the system \eqref{Brusselator}. The isolation of the bifurcation parameter $\mu=\be-\be_1$ will make the linear operator in \eqref{bifurcation} below have no dependence on $\mu$ while writing the nonlinearity in terms of multilinear maps will bring us computational convenience later. Notice now $v=(u_1,u_2)^T-(\al,\frac{\be_1}{\al})^T$.

For the above purposes, we write the opeator equation \eqref{operator equation} as follows
\begin{equation}\label{bifurcation}
\partial_t v= \mathbf{L}_{\be_1}v+( \mathbf{L}_{\be}v-\mathbf{L}_{\be_1}v)+\mathbf{R}(v).
\end{equation}
Introducing the notation $\mathbf{R}(v,\mu):=(\mathbf{L}_{\be}v-\mathbf{L}_{\be_1}v)+\mathbf{R}(v)$, we write 
$\mathbf{R}(v,\mu)$ in terms of multilinear maps as the following sum
$$\mathbf{R}(v,\mu)=\mu \mathbf R_{01}v+\mathbf R_{20}(v, v)+
\mathbf R_{30}(v, v, v)+\mu \mathbf R_{21}(v,v)$$ where

$$\mathbf R_{01}v=\begin{pmatrix}v_1\\-v_1 \end{pmatrix},\,\,\mathbf R_{20}(u, v)=\begin{pmatrix}\al(u_1 v_2+ u_2 v_1)+\frac{\be_1}{\al}u_1 v_1\\ -\al(u_1 v_2+ u_2 v_1)-\frac{\be_1}{\al}u_1 v_1\end{pmatrix},$$
$$\mathbf R_{30}(u, v, w)=\frac{1}{3}\begin{pmatrix}u_1 v_1 w_2+ u_1 v_2 w_1+ u_2 v_1 w_1\\-u_1 v_1 w_2-u_1 v_2 w_1- u_2 v_1 w_1 \end{pmatrix}, \,\,\mathbf R_{21}(u,v)=\begin{pmatrix}\frac{1}{\al}u_1 v_1\\-\frac{1}{\al}u_1 v_1\end{pmatrix}.$$ 

Now, we conclude the existence of center manifolds for the system \eqref{bifurcation}. For our specific purpose in the current study, we make the following choice of Banach spaces

$$\mathbf Z=H^2(\mathbb T)\times H^2(\mathbb T) ,\,\mathbf Y=H^1(\mathbb T)\times H^1(\mathbb T),\,\,\mathbf X=L^2(\mathbb T)\times L^2(\mathbb T).$$
Then we have the following theorem.

\begin{theorem}\label{cmt}
System \eqref{bifurcation} (or equivalently system \eqref{Brusselator}) admits a parameter-dependent center manifold $\mathcal M_0(\mu)$ given through the reduction function $\Psi\in C^k(\mathbf Z_0, \mathbf Z_h)$ by $\mathcal M_0 (\mu)=\{ v_0+\Psi(v_0,\mu); v_0\in \mathbf Z_c \}$ in a neighborhood of $\mathcal O_v\times \mathcal O_{\mu}$ of $(0,0)$ and $[\Psi, R(\phi)]=0$ and $[\Psi, S]=0$.
\end{theorem}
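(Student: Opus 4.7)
The plan is to verify the hypotheses of a standard parameter-dependent center manifold theorem on the Banach scale $\mathbf{Z} \hookrightarrow \mathbf{Y} \hookrightarrow \mathbf{X}$, and then to deduce the equivariance relations $[\Psi, R(\phi)] = 0$ and $[\Psi, S] = 0$ from the $O(2)$-equivariance of every step of the construction. The four abstract hypotheses I would check are: (H1) $\mathbf{L}_{\be_1}$ is closed on $\mathbf{X}$ with dense domain $\mathbf{Z}$ and belongs to $\mathcal{L}(\mathbf{Z}, \mathbf{X})$; (H2) the purely imaginary spectrum of $\mathbf{L}_{\be_1}$ is isolated, finite-dimensional, and separated by a uniform gap from the rest of $\sigma(\mathbf{L}_{\be_1})$; (H3) a high-frequency resolvent bound on the imaginary axis; (H4) $C^k$-regularity of $\mathbf{R}(\cdot,\mu)$ as a map from a neighborhood of $(0,0)$ in $\mathbf{Z} \times \mathbb{R}$ into $\mathbf{Y}$.

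Hypotheses (H1) and (H2) follow directly from the Fourier computations of Section \ref{Onset of bifurcation}: only the matrices $M_{\pm 1}$ produce purely imaginary eigenvalues at $\be = \be_1$, namely $\pm i\omega$ with geometric multiplicity two each, and inspection of the coefficients of $P_n(\lambda)$ for $|n|\geq 2$ together with the explicit calculation at $n = 0$ shows that the real parts of all other eigenvalues are bounded away from zero uniformly in $n$, yielding a genuine spectral gap. Hypothesis (H3) is exactly the content of Lemma \ref{Resolvent estimate}. For (H4), the nonlinearity decomposes as $\mathbf{R}(v,\mu) = \mu \mathbf{R}_{01}v + \mathbf{R}_{20}(v,v) + \mathbf{R}_{30}(v,v,v) + \mu \mathbf{R}_{21}(v,v)$, which is polynomial in $v$ of degree three and affine in $\mu$; since $H^1(\mathbb{T}) \hookrightarrow L^\infty(\mathbb{T})$ is a Banach algebra in one space dimension, each multilinear map $\mathbf{R}_{jk}$ is continuous from $\mathbf{Y}^{\times j}$ into $\mathbf{Y}$, from which $C^\infty$-regularity on $\mathbf{Y} \times \mathbb{R}$ (and a fortiori the required $C^k$-regularity on $\mathbf{Z} \times \mathbb{R}$) follows. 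Invoking the abstract theorem then produces the reduction function $\Psi \in C^k(\mathbf{Z}_c \times \mathcal{O}_\mu, \mathbf{Z}_h)$ with $\Psi(0,0) = 0$ and $D_{v_0}\Psi(0,0) = 0$, and the local invariant manifold $\mathcal{M}_0(\mu)$ as stated.

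To obtain the equivariance, I would use the uniqueness of the reduction function. Because $R(\phi)$ and $S$ commute with $\mathbf{L}_{\be_1}$, they commute with its resolvent and hence with the spectral projector $P_c$ (defined by a contour integral around the set $\{\pm i\omega\}$), so the splitting $\mathbf{Z} = \mathbf{Z}_c \oplus \mathbf{Z}_h$ is $O(2)$-invariant. Choosing the cut-off function used to construct $\Psi$ to be radial (hence $O(2)$-invariant) in $v_0$, the map $(v_0, \mu) \mapsto g\, \Psi(g^{-1} v_0, \mu)$ satisfies the same fixed point equation as $\Psi$ for every $g \in O(2)$, and uniqueness of the fixed point forces $g\, \Psi = \Psi\, g$, which is the claimed commutation. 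The main obstacle is really the combination of the uniform spectral gap at large wave numbers with the right resolvent control on the intermediate space $\mathbf{Y}$, but both are handled by the sectoriality of $-\mathbf{L}_{\be_1}$ established in Lemma \ref{Resolvent estimate} together with interpolation between $\mathbf{X}$ and $\mathbf{Z}$; after that the application of the abstract theorem is mechanical.
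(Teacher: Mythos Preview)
Your proposal is correct and follows essentially the same route as the paper: verify the hypotheses of the abstract center manifold theorem (Theorem~A.1 in the Appendix) using the spectral analysis of Section~\ref{Onset of bifurcation}, the resolvent bound of Lemma~\ref{Resolvent estimate}, and Sobolev embedding for the regularity of $\mathbf{R}$. The one cosmetic difference is that the paper obtains $[\Psi,R(\phi)]=0$ and $[\Psi,S]=0$ by directly invoking the symmetry clause (c) of the abstract theorem, having already checked in Section~\ref{Onset of bifurcation} that $R(\phi)$ and $S$ commute with both $\mathbf{L}_{\be_1}$ and $\mathbf{R}$; your fixed-point uniqueness argument with an $O(2)$-invariant cutoff is precisely how that clause is proved, so you are simply unpacking what the paper takes as a black box.
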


\begin{proof}
We shall verify the assumptions in the center manifold theorem Theorem A.1 in the Appendix item by item. It is evident that $\mathbf R(0,0)=0$ and $D_v\mathbf R(0,0)=0$ from the specific form of $\mathbf R(v,\mu)$. By Sobolev embedding theorem, it is also easy to verify that $\mathbf R(v,\mu)\in \mathbf Y$. The spectral gap condition and symmetry conclusion follow from the analysis in Section \ref{Onset of bifurcation} while the resolvent estimate follows from Lemma \ref{Resolvent estimate}.
\end{proof}


\subsection{Normal form of bifurcation dynamics}

Due to Theorem \ref{cmt} and our spectral analysis, we know that the reduced dynamics of the system \eqref{bifurcation} is a four dimensional system of nonlinear ordinary differential equations. We will adopt the complex conjugate pair parametrization $z_1\xi_1+z_2\xi_2+\bar z_1\bar \xi_1+\bar z_2\bar \xi_2^*$ of the center space in \eqref{center space} throughout the article. We readily identify that the action of the operator $\mathbf L_{\be_1}$ on the center space is given by the following rules:
$$\mathbf L_{\be_1}\xi_1=i\om\xi_1,\, \mathbf L_{\be_1}\xi_2=-i\om\xi_2.$$
Due to the finite dimensionality of the center space and the spectral mapping theorem, the action of the operator $e^{t\mathbf L_{\be_1}^*}$ on the center space is given as follows
$$e^{t\mathbf L_{\be_1}^*}\xi_1=e^{i\om}\xi_1,\, e^{t\mathbf L_{\be_1}^*}\xi_2=e^{-i\om}\xi_2.$$
In view of our specific parametrization for the center space, the dynamics of \eqref{Brusselator} on the center manifolds has the following form
	\begin{equation}\label{O2}
	\begin{cases}
	\frac{d}{dt}z_1=i\omega z_1+ \mathcal N^1_{\mu}(z_1,z_2,\bar z_1, \bar z_2)+\zeta(z_1, z_2, \bar z_1, \bar z_2, \mu),\\
	\frac{d}{dt}z_2=i\omega z_2+ \mathcal N^2_{\mu}(z_1,z_2,\bar z_1, \bar z_2)+\zeta(z_2, z_1, \bar z_2, \bar z_1, \mu).
	\end{cases}
	\end{equation}
The polynomial function $(\mathcal N^1_{\mu}, \mathcal N^2_{\mu}, \overline{\mathcal N^1_{\mu}}, \overline{\mathcal N^1_{\mu}})^T$ corresponds to the function $\mathcal N_{\mu}$
in Theorem \ref{normal form theorem}. We will denote $\mathcal N_{\mu}(z_1, z_2, \bar z_1, \bar z_2):=(\mathcal N^1_{\mu}, \mathcal N^2_{\mu}, \overline{\mathcal N^1_{\mu}}, \overline{\mathcal N^1_{\mu}})^T$. Here $\zeta$ represents higher order terms.

We shall explore the function $\mathcal N_{\mu}(z_1, z_2, \bar z_1, \bar z_2)$ by Theorem \ref{normal form theorem}. This can be easily achieved by choosing specific test values of time variable $t$ and phase variable $\phi$.

In view of the action of the operator $e^{t\mathbf L_{\be_1}^*}$ on the center space, we know that the characteristic condition $\mathcal{N}_{\mu}(e^{t\mathbf{L}^*}v)=e^{t\mathbf{L}^*}\mathcal{N}_{\mu}(v)$ yields
\begin{equation}\label{1}
\mathcal N^j_{\mu}(e^{i\om t}z_1, e^{i\om t}z_2, e^{-i\om t}\bar z_1, e^{-i\om t}\bar z_2)=e^{i\om t}\mathcal N^j_{\mu}(z_1, z_2, \bar z_1, \bar z_2),\,\, j=1, 2.
\end{equation}

Noticing that $[\mathcal N_{\mu}, R(\phi)]=0$ and $[\mathcal N_{\mu}, S]=0$, we have the following conclusions:

\begin{equation}\label{2}
\mathcal N^1_{\mu}(e^{i\phi}z_1, e^{-i\phi}z_2, e^{-i\phi}\bar z_1, e^{i\phi}\bar z_2)=e^{i\phi}\mathcal N^1_{\mu}(z_1, z_2, \bar z_1, \bar z_2),
\end{equation}
\begin{equation}\label{3}
\mathcal N^2_{\mu}(e^{i\phi}z_1, e^{-i\phi}z_2, e^{-i\phi}\bar z_1, e^{i\phi}\bar z_2)=e^{-i\phi}\mathcal N^2_{\mu}(z_1, z_2, \bar z_1, \bar z_2)
\end{equation}
and
\begin{equation}\label{4}
\mathcal N^1_{\mu}(z_2, z_1, \bar z_2, \bar z_1)=\mathcal N^2_{\mu}(z_1, z_2, \bar z_1, \bar z_2).
\end{equation}

From \eqref{1} and \eqref{3}, we obtain that
\begin{equation}\label{four}
\mathcal N^1_{\mu}(e^{i(\phi+\om t)}z_1, e^{i(-\phi+\om t)}z_2, e^{-i(\phi+\om t)}\bar z_1, e^{-i(-\phi+\om t)}\bar z_2)=e^{i(\phi+\om t)}\mathcal N^1_{\mu}(z_1, z_2, \bar z_1, \bar z_2).
\end{equation}
Choosing special values of $t\in\mathbb R^1$ and $\phi\in \mathbb R^1/2\pi \mathbb Z$ such that
$$\phi+\om t=-\arg z_1,\, -\phi+\om t=-\arg z_2,$$
we have
\begin{equation}\label{5}
\mathcal N^1_{\mu}(|z_1|, |z_2|, |z_1|, |z_2|)=e^{-i\arg z_1}\mathcal N^1_{\mu}(z_1, z_2, \bar z_1, \bar z_2).
\end{equation}
Now choosing $t\in\mathbb R^1$ and $\phi\in \mathbb R^1/2\pi \mathbb Z$ such that
$$\phi+\om t=\pi,\, -\phi+\om t=0,$$
we find from \eqref{four} that
\begin{equation}\label{6}
\mathcal N^1_{\mu}(-z_1, z_2, -\bar z_1, \bar z_2)=-\mathcal N^1_{\mu}(z_1, z_2, \bar z_1, \bar z_2).
\end{equation}
Finally choosing $t\in\mathbb R^1$ and $\phi\in \mathbb R^1/2\pi \mathbb Z$ such that
$$\phi+\om t=0,\, -\phi+\om t=\pi,$$
we deduce from \eqref{four} that
\begin{equation}\label{7}
\mathcal N^1_{\mu}(z_1, -z_2, \bar z_1, -\bar z_2)=\mathcal N^1_{\mu}(z_1, z_2, \bar z_1, \bar z_2).
\end{equation}
Equations \eqref{5}, \eqref{6} and  \eqref{7} imply that the polynomial $\mathcal N^1_{\mu}(z_1, z_2, \bar z_1, \bar z_2)=z_1P_1(|z_1|^2, |z_2|^2)$ for some polynomial $P_1$.
Repeating the above reasoning, we also obtain that $\mathcal N^2_{\mu}(z_1, z_2, \bar z_1, \bar z_2)=z_2 P_2(|z_1|^2, |z_2|^2)$ for some polynomial $P_2$. Taking into account of \eqref{4}, we obtain that $P_1(|z_2|^2, |z_1|^2)=P_2(|z_1|^2, |z_2|^2)$.

To sum up, we obtain from the above analysis that the dynamics of \eqref{Brusselator} on the center manifolds has the following form
	\begin{equation}\label{normal form equation}
	\begin{cases}
	\frac{d}{dt}z_1=i\omega z_1+z_1 P_{\mu}(|z_1|^2, |z_2|^2)+\zeta(z_1, z_2, \bar z_1, \bar z_2, \mu),\\
	\frac{d}{dt}z_2=i\omega z_2+ z_2 P_{\mu}(|z_2|^2, |z_1|^2)+\zeta(z_2, z_1, \bar z_2, \bar z_1, \mu).
	\end{cases}
	\end{equation}
Due to tangency in Theorem \ref{normal form theorem}, the constant $c_0$ in the polynomial $P_{\mu}(|z_1|^2, |z_2|^2)=c_0+a\mu+b|z_1|^2+c|z_2|^2+h.o.t$ must be zero. Therefore, we obtain the following truncated form of \eqref{bifurcation} with the complex constants $a$, $b$ and $c$ to be determined:

         \begin{equation}
	\begin{cases}
	\frac{d}{dt}z_1=i\omega z_1+z_1(a\mu+b|z_1|^2+c|z_2|^2),\\
	\frac{d}{dt}z_2=i\omega z_2+ z_2(a\mu+b|z_2|^2+c|z_1|^2).
	\end{cases}
	\end{equation}

\subsection{Comparison of coefficients}\label{comparison of coefficients}

To show our main result, we will verify that the non-degeneracy conditions for the normal form dynamics can survive for certain parameter values. Therefore, we shall compute the constants $a, b$ and $c$ in the normal form dynamics. 

By center manifold theory, we decompose $v=z_1\xi_1+z_2\xi_2+\bar z_1\bar \xi_1+\bar z_2\xi_2+\Psi(z_1, z_2, \bar z_1,\bar z_2)$ where $\Psi(z_1, z_2, \bar z_1,\bar z_2,\mu)\in \mathbf Z_h$ is the center manifold reduction function. If we take into account of the normal form transformation step $w=z_1\xi_1+z_2\xi_2+\bar z_1\bar \xi_1+\bar z_2\xi_2+\Pi_{\mu}(z_1, z_2, \bar z_1, \bar z_2)$ where $\Pi_{\mu}(z_1, z_2, \bar z_1, \bar z_2)\in\mathbf Z_c$, we obtain another decomposition
$v=z_1\xi_1+z_2\xi_2+\bar z_1\bar \xi_1+\bar z_2\xi_2+\Pi_{\mu}(z_1, z_2, \bar z_1, \bar z_2)+\Psi(z_1, z_2, \bar z_1,\bar z_2)$. With a slight abuse of notation, we still write the sum $\Pi_{\mu}(z_1, z_2, \bar z_1, \bar z_2)+\Psi(z_1, z_2, \bar z_1,\bar z_2)$ as $\Psi(z_1, z_2, \bar z_1, \bar z_2, \mu)$ but now $\Psi(z_1, z_2, \bar z_1, \bar z_2, \mu)\in\mathbf Z$.

In order to determine the coefficients $a$, $b$ and $c$ in the truncated normal form, we expand the function $\Psi$:
\begin{equation}\label{Psi}
\Psi(z_1, z_2, \bar z_1, \bar z_2)=\sum_{p+q+r+s+l\geq 1}\Psi_{pqrsl}z_1^p \bar{z}_1^q z_2^r \bar{z}_2^s\mu^l.
\end{equation}
In this expansion, $\Psi_{pqrsl}$ are functions in the space $\mathbf Z$. Due to tangency, we know
$$\Psi_{10000}=\Psi_{01000}=\Psi_{00100}=\Psi_{00010}=0.$$ 
By conjugacy, we also have $\Psi_{pqrsl}=\bar{\Phi}_{qpsrl}$.

By flow invariance, we use the substitutions given by the normal form \eqref{normal form equation} and the expansion of $\Psi$ \eqref{Psi} in the bifurcation equation \eqref{bifurcation}. By comparing coefficients at orders $O(\mu)$, $O(\mu z_1)$ (or $O(\mu z_2)$), $O(|z_1|^2)$, $O(z_1^2)$, $O(|z_2|^2)$, $O(z_1z_2)$, $O(z_1 \bar z_2)$,
$O(z_1^2\bar z_1)$, $O(z_1 |z_2|^2)$, we obtain that 
\begin{equation}\label{operator equations}
\begin{cases}
(\mathbf{L}_{\be_1}+\mathbf R_{01})\Psi_{00001}=0,\\
a\xi+(i\omega-\mathbf L_{\beta_1})\Psi_{10100}=\mathbf R_{01}(\xi_1)+2\mathbf R_{20}(\xi_1, \Psi_{00001}),\\
(2i\om-\mathbf L_{\be_1})\Psi_{20000}=\mathbf R_{20}(\xi_1,\xi_1),\\
\mathbf L_{\be_1}\Psi_{11000}=-2\mathbf R_{20}(\xi_1,\bar\xi_1),\\
\Psi_{00110}=S\Psi_{11000},\\
(2i\om-\mathbf L_{\be_1})\Psi_{10100}=2\mathbf R_{20}(\xi_1,\xi_2),\\
\mathbf L_{\be_1}\Psi_{10010}=-2\mathbf R_{20}(\xi_1,\bar\xi_2),\\
b\xi_1+(i\omega-\mathbf L_{\be_1})\Psi_{21000}=2R_{20}(\xi_1, \Psi_{11000})+2R_{20}(\bar\xi_1, \Psi_{20000})+3R_{30}(\xi_1,\xi_1, \bar\xi_1),\\
c\xi_1+(i\omega-\mathbf L_{\be_1})\Psi_{10110}=2R_{20}(\xi_1, \Psi_{00110})+2R_{20}(\xi_2, \Psi_{10010})++2R_{20}(\bar\xi_2, \Psi_{10100})+6R_{30}(\xi_1,\xi_2, \bar\xi_2).
\end{cases}
\end{equation}

Among the above operator equations, the three relations containing $a$, $b$ and $c$ amount to saying that the following three
functions
\begin{equation}\label{vector 1}
-a\xi_1+\mathbf R_{01}(\xi_1)+2\mathbf R_{20}(\xi_1, \Psi_{00001}),
\end{equation}
\begin{equation}\label{vector 2}
-b\xi_1+2R_{20}(\xi_1, \Psi_{11000})+2R_{20}(\bar\xi_1, \Psi_{20000})+3R_{30}(\xi_1,\xi_1, \bar\xi_1),
\end{equation}
\begin{equation}\label{vector 3}
-c\xi_1+2R_{20}(\xi_1, \Psi_{00110})+2R_{20}(\xi_2, \Psi_{10010})++2R_{20}(\bar\xi_2, \Psi_{10100})+6R_{30}(\xi_1,\xi_2, \bar\xi_2)
\end{equation}
are all elements in the range of the operator $i\omega-\mathbf L_{\be_1}$.
By the kernel and range relation for a linear operator and its conjugate, we know
$\overline{R(i\omega-\mathbf L_{\be_1})}=\big(\ker\, (i\omega-\mathbf L_{\be_1})^*\big)^{\perp}$ where
the symbols $R(i\omega-\mathbf L_{\be_1})$ and $\ker \,(i\omega-\mathbf L_{\be_1})^*$ represent the range of $i\omega-\mathbf L_{\be_1}$ and
kernel of the linear operator $(i\omega-\mathbf L_{\be_1})^*$ respectively.
Recalling the definition of $\xi_1^*$ by \eqref{xi star}, we know that the three vectors in \eqref{vector 1}, \eqref{vector 2}, \eqref{vector 3} are all perpendicular to $\xi_1^*$ in $L^2(\mathbb T)\times L^2(\mathbb T)$. Therefore, by orthogonality, we have
\begin{equation}\label{a}
a=\langle \mathbf R_{01}(\xi_1)+2\mathbf R_{20}(\xi_1, \Psi_{00001}), \xi_1^{*}  \rangle;
\end{equation}

\begin{equation}\label{b}
b=\langle 2\mathbf R_{20}(\xi_1, \Psi_{11000})+2\mathbf R_{20}(\bar\xi_1, \Psi_{20000})+3\mathbf R_{30}(\xi_1,\xi_1, \bar\xi_1), \xi_1^{*}  \rangle;
\end{equation}

\begin{equation}\label{c}
c=\langle 2\mathbf R_{20}(\xi_1, \Psi_{00110})+2\mathbf R_{20}(\xi_2, \Psi_{10010})+2\mathbf R_{20}(\bar\xi_2, \Psi_{10100})+6\mathbf R_{30}(\xi_1,\xi_2, \bar\xi_2), \xi_1^{*}  \rangle.
\end{equation}

Now we proceed to examine the coefficients $a, b$ and $c$, which is crucial to test the non-degeneracy of the 
spatial-temporal oscillations for our model under investigation. 

First, we aim to obtain the parameter $a$. We can compute $a$ either by asymptotic analysis or by the above functional equation. Here we demonstrate both devices as one device can be regarded as a verification of the computational result of the other. 

Let us first calculate $a$ by asymptotic analysis. The parameter $a$ is the coefficient of $\mu$  in the spectral branch of the operator $\mathbf{L}_{\be}$ corresponding to the eigenvalue $i\om$ in the limit $\mu\rightarrow 0$. The eigenvalues of the operator $\mathbf L_{\be}=\begin{pmatrix} \delta_1\partial^2_x +\be_1+\mu-1 & \al^2\\ -\be_1-\mu & \delta_2\partial^2_x -\al^2\end{pmatrix}$ corresponding to $\pm i\om$ are given by the roots of the polynomial
$$
P_1(\lambda,\be)=\lambda^2+(\be_1-\be_1-\mu)\lambda+\om^2-\de_2\mu=\lambda^2-\mu\lambda+\om^2-\de_2\mu.
$$ 
Denote the two branches of eigenvalues by $\lambda_{\pm}(\mu)$, we obtain by the quadratic formula that
$$\lambda_{\pm}(\mu)=\frac{1}{2}\mu\pm i\sqrt{\om^2-\de_2\mu-\mu^2/4}.$$
Therefore, the asymptotic expansion of the branch corresponding to $i\om$ is given by
$$\lambda_{+}(\mu)=i\om+(\frac{1}{2}-i\frac{\de_2}{2\om})\mu+O(\mu^2).$$
Consequently, we find that
$$a=\frac{1}{2}-i\frac{\de_2}{2\om}.$$

Now we demonstrate the calculation of $a$ through \eqref{a}. From the relation $(\mathbf{L}_{\be_1}+\mathbf R_{01})\Psi_{00001}=0$, we can compute $\Psi_{00001}$. In fact, to compute $\Psi_{00001}$, we have to seek solutions of the form $e^{0ix}V$ with $V$ being a complex vector. Put this ansatz into the operator equation $(\mathbf{L}_{\be_1}+\mathbf R_{01})\Psi_{00001}=0$, we obtain the following algebraic equation on the complex vector $V$
\begin{equation*}
\begin{pmatrix}\be_1-2 & \al^2\\ -\be_1+1 & -\al^2 \end{pmatrix}
V=\begin{pmatrix}0\\0 \end{pmatrix},
\end{equation*}
which only has the trivial solution. Therefore, we find that $\Psi_{00001}=0$, and consequently obtain that
\begin{equation}\label{a formula}
a=\langle \mathbf R_{01}(\xi_1)+2\mathbf R_{20}(\xi_1, \Psi_{00001}), \xi_1^{*}  \rangle=\langle \mathbf R_{01}(\xi_1), \xi_1^{*}  \rangle=\frac{1}{2}-i\frac{\de_2}{2\om}.
\end{equation}

Second, we calculate the parameter $b$. For this purpose, we first compute $\Psi_{11000}$ and $\Psi_{20000}$.

As we know that $\Psi_{11000}$ is determined by the operator equation
$\mathbf L_{\be_1}\Psi_{11000}=-2\mathbf R_{20}(\xi_1,\bar\xi_1)$,
we first examine the right hand side term of the operator equation. Simple computation yields
$$-2\mathbf R_{20}(\xi_1,\bar{\xi_1})=\frac{4(\al^2+\de_2)-2\be_1}{\al}\begin{pmatrix}1\\-1\end{pmatrix}.$$
Due to the form of $-2\mathbf R_{20}(\xi_1,\bar{\xi_1})$, we need seek a solution of the form $e^{i0x}V$ with $V$ being a complex vector. Using the substitution  $\Psi_{11000}=e^{i0x}V$ in the equation $\mathbf L_{\be_1}\Psi_{11000}=-2\mathbf R_{20}(\xi_1,\bar\xi_1)$, we arrive at the following algebraic equation on $V$:
$$
\begin{pmatrix}\be_1-1 & \al^2\\ -\be_1 & -\al^2 \end{pmatrix}V=\frac{4(\al^2+\de_2)-2\be_1}{\al}\begin{pmatrix}1\\-1\end{pmatrix}.
$$
As $0\in \rho(\mathbf L_{\be_1})$, the resolvent set of $\mathbf L_{\be_1}$, we know
$\det \begin{pmatrix}\be_1-1 & \al^2\\ -\be_1 & -\al^2 \end{pmatrix}=P_0(0)\not=0$.
Therefore, the vector $V$ can be solved uniquely:
$$
V=\begin{pmatrix}\be_1-1 & \al^2\\ -\be_1 & -\al^2 \end{pmatrix}^{-1}\frac{4(\al^2+\de_2)-2\be_1}{\al}\begin{pmatrix}1\\-1\end{pmatrix}=\frac{4(\al^2+\de_2)-2\be_1}{\al^3}\begin{pmatrix}0\\1\end{pmatrix}.
$$
Consequently, we find that
\begin{equation}\label{11000}
\Psi_{11000}=\frac{4(\al^2+\de_2)-2\be_1}{\al^3}\begin{pmatrix}0\\1\end{pmatrix}.
\end{equation}

The computation of $\Psi_{20000}$ can be proceeded similarly. By examining the right hand side term of the operator equation $$(2i\om-\mathbf L_{\be_1})\Psi_{20000}=\mathbf R_{20}(\xi_1,\xi_1),$$
we find that
$$\mathbf R_{20}(\xi_1,\xi_1)=\frac{-2(\al^2+\de_2)+\be_1+2i\om}{\al}e^{2ix}\begin{pmatrix}1\\-1\end{pmatrix}.$$
Inspecting the form of $\mathbf R_{20}(\xi_1,\xi_1)$ above, we shall seek a solution of the form $e^{2ix}V$ with $V$ being a complex vector. Inserting $\Psi_{20000}=e^{2ix}V$ to the above operator equation, we deduce the following algebraic equation on the complex vector  $V$:
$$\Big\{2i\om I_2-\begin{pmatrix}-4\de_1+\be_1-1 & \al^2\\ -\be_1 & -4\de_2-\al^2 \end{pmatrix}\Big\}V=\frac{-2(\al^2+\de_2)+\be_1+2i\om}{\al}\begin{pmatrix}1\\-1\end{pmatrix}.$$
As $2i\om\in \rho(\mathbf L_{\be_1})$, we know
$$\det\Big\{2i\om I_2-\begin{pmatrix}-4\de_1+\be_1-1 & \al^2\\ -\be_1 & -4\de_2-\al^2 \end{pmatrix}\Big\}=P_2(2i\om)\not=0.$$
Therefore, the vector $V$ can be solved uniquely as
\begin{align*}
V&=\frac{1}{P_2(2i\om)}\frac{-2(\al^2+\de_2)+\be_1+2i\om}{\al}
\begin{pmatrix}2i\om+4\de_2+\al^2 & \al^2 \\ -\be_1 & 2i\om+4\de_1-\be_1+1 \end{pmatrix}\begin{pmatrix}1\\-1\end{pmatrix}\\
&=\frac{1}{P_2(2i\om)}\frac{-2(\al^2+\de_2)+\be_1+2i\om}{\al}
\begin{pmatrix}2i\om+4\de_2 \\ -2i\om-4\de_1-1 \end{pmatrix}.
\end{align*}
Therefore, we obtain
\begin{equation}\label{20000}
\Psi_{20000}=\frac{1}{P_2(2i\om)}\frac{-2(\al^2+\de_2)+\be_1+2i\om}{\al}
e^{2ix}\begin{pmatrix}2i\om+4\de_2 \\ -2i\om-4\de_1-1 \end{pmatrix}.
\end{equation}

Using \eqref{11000} and \eqref{20000}, we can now compute all the terms 
$3\mathbf R_{30}(\xi_1, \xi_1, \bar{\xi_1})$, $2\mathbf R_{20}(\xi_1, \Psi_{11000})$
and $2\mathbf R_{20}(\bar\xi_1, \Psi_{20000})$ in \eqref{b}. Direct computations yield:
$$3\mathbf R_{30}(\xi_1, \xi_1, \bar{\xi_1})=\frac{-3(\al^2+\de_2)+i\om}{\al^2}e^{ix}\begin{pmatrix}1\\-1 \end{pmatrix},$$
$$2\mathbf R_{20}(\xi_1, \Psi_{11000})=\frac{8(\al^2+\de_2)-4\be_1}{\al^2}e^{ix}\begin{pmatrix}
1\\-1\end{pmatrix},$$
\begin{align*}
2\mathbf R_{20}(\bar\xi_1, \Psi_{20000})=&\frac{2}{P_2(2i\om)}\frac{-2(\al^2+\de_2)+\be_1+2i\om}{\al}e^{ix}\begin{pmatrix}1\\-1\end{pmatrix}
\times\\&\Big(-\al(2i\om+4\de_1+1)-\frac{2i\om+4\de_2}{\al}(i\om-1-\de_1)\Big).
\end{align*}
Gleaning the above information and noticing that
\begin{equation}\label{product}
\Big\langle e^{ix}\begin{pmatrix}1\\-1\end{pmatrix}, \frac{i\al^2}{4\pi\om}e^{ix} \begin{pmatrix}\frac{\de_2+\al^2-i\om}{\al^2}\\1\end{pmatrix}\Big\rangle=-\frac{i(\de_2+i\om)}{2\om},
\end{equation}
we obtain that
\begin{align}\label{b expression}
b=&\langle 2\mathbf R_{20}(\xi_1, \Psi_{11000})+2\mathbf R_{20}(\bar\xi_1, \Psi_{20000})+3\mathbf R_{30}(\xi_1,\xi_1, \bar\xi_1), \xi_1^{*}  \rangle \nonumber\\
=&-\frac{i(\de_2+i\om)}{2\om}\Big\{\frac{5(\al^2+\de_2)-4\be_1+i\om}{\al^2}+\frac{2}{P_2(2i\om)}\frac{-2(\al^2+\de_2)+\be_1+2i\om}{\al}\nonumber\\
& \times\Big(-\al(2i\om+4\de_1+1)-\frac{2i\om+4\de_2}{\al}(i\om-1-\de_1)\Big) \Big\}\nonumber\\
=&\frac{\om-i\de_2}{2\om\al^2}\Big\{\Big(5(\al^2+\de_2)-4\be_1+i\om\Big)+\frac{2}{P_2(2i\om)}\Big(-2(\al^2+\de_2)+\be_1+2i\om\Big)\nonumber\\
& \times\Big(-\al^2(2i\om+4\de_1+1)-(2i\om+4\de_2)(i\om-1-\de_1)\Big) \Big\}.
\end{align}

In oder to simplify the above expression, we shall make some simple but tedious computations. We first observe that
\begin{align}
&\Big(-2(\al^2+\de_2)+\be_1+2i\om\Big)
\times\Big(-\al^2(2i\om+4\de_1+1)-(2i\om+4\de_2)(i\om-1-\de_1)\Big) \nonumber\\
&=(\be_1-2\al^2-2\de_2)(2\om^2+4\de_2+4\de_1\de_2-\al^2-4\al^2\de_1)-2\om^2(2+2\de_1-4\de_2-2\al^2)\nonumber\\
&+i\om\Big( (\be_1-2\al^2-2\de_2)(2+2\de_1-4\de_2-2\al^2)+2(2\om^2+4\de_2+4\de_1\de_2-\al^2-4\al^2\de_1)  \Big)\nonumber\\
&:=N_r+i\om N_i,
\end{align}
where we have used the following convention for the real quantities $N_r$ and $N_i$:
\begin{equation}\label{Nr}
N_r=(\be_1-2\al^2-2\de_2)(2\om^2+4\de_2+4\de_1\de_2-\al^2-4\al^2\de_1)-2\om^2(2+2\de_1-4\de_2-2\al^2)
\end{equation}
and
\begin{equation}\label{Ni}
N_i=(\be_1-2\al^2-2\de_2)(2+2\de_1-4\de_2-2\al^2)+2(2\om^2+4\de_2+4\de_1\de_2-\al^2-4\al^2\de_1).
\end{equation}

Denote by $B_r$ and $\om B_i$ the real part and imaginary part of 
\begin{align*}
\frac{2}{P_2(2i\om)}\Big(-2(\al^2+\de_2)+\be_1+2i\om\Big)
\Big(-\al^2(2i\om+4\de_1+1)-(2i\om+4\de_2)(i\om-1-\de_1)\Big)
\end{align*}
respectively. To make further computations, we first write $P_2(2i\om)$ explicitly. Noticing the expressions of $P_n(\lambda)$ and $\om$, we obtain
$$P_2(2i\om)=-3\al^2+12\de_1\de_2+6i(\de_1+\de_2)\om,$$
which is a complex number. Therefore, we have
\begin{equation}\label{P2omega}
\frac{1}{P_2(2i\om)}=-3\frac{\al^2-4\de_1\de_2+2(\de_1+\de_2)\om i}{(\al^2-4\de_1\de_2)^2+4(\de_1+\de_2)^2\om^2}.
\end{equation}
Then, by direct computations, the real quantities $B_r$ and $B_i$ are given by
\begin{equation}\label{Br}
B_r=\frac{-6}{(\al^2-4\de_1\de_2)^2+4(\de_1+\de_2)^2\om^2}
\Big((\al^2-4\de_1\de_2)N_r-2\om^2(\de_1+\de_2) N_i\Big),
\end{equation}
\begin{equation}\label{Bi}
B_i=\frac{-6}{(\al^2-4\de_1\de_2)^2+4(\de_1+\de_2)^2\om^2}\Big((\al^2-4\de_1\de_2)N_i+2(\de_1+\de_2) N_r
\Big)
\end{equation}
in which $N_r$ and $N_i$ are given by \eqref{Nr} and \eqref{Ni} respectively.

Collecting the information above, we obtain the following expression of $b$ from \eqref{b expression}:
\begin{align}\label{b formula}
b=&\frac{\om-i\de_2}{2\om\al^2}\Big\{\Big(5(\al^2+\de_2)-4\be_1+i\om\Big)+(B_r+i \om B_i) \Big\}\nonumber\\
&=\frac{\om-i\de_2}{2\om\al^2}\Big(5(\al^2+\de_2)-4\be_1+B_r+i\om(1+B_i)\Big)\nonumber\\
&=\frac{\om}{2\om\al^2}(5\al^2+5\de_2-4\be_1+B_r+\de_2+\de_2 B_i)\nonumber\\
&+i\frac{1}{2\om\al^2}(\om^2+\om^2 B_i-5\de_2\al^2-5\de_2^2+4\de_2\be_1-\de_2 B_r).
\end{align}

Third, we compute the parameter $c$. For this purpose, we first determine $\Psi_{10010}$, $\Psi_{00110}$ and
$\Psi_{10100}$.

To determine $\Psi_{10010}$ from the operator equation
$\mathbf L_{\be_1}\Psi_{10010}=-2\mathbf R_{20}(\xi_1,\bar\xi_2)$,
we first note that
$$-2\mathbf R_{20}(\xi_1,\bar\xi_2)=\frac{4(\al^2+\de_2)-2\be_1}{\al}e^{2ix}\begin{pmatrix}1\\-1 \end{pmatrix}.$$
Due to the form of $-2\mathbf R_{20}(\xi_1,\bar\xi_2)$ above, we shall seek a solution of the form $\Psi_{10010}=e^{2ix}V$. Putting this ansatz into the equation $\mathbf L_{\be_1}\Psi_{10010}=-2\mathbf R_{20}(\xi_1,\bar\xi_2)$, we obtain an algebraic equation on $V$:
$$
\begin{pmatrix}-4\de_1+\be_1-1 & \al^2 \\-\be_1 & -4\de_2-\al^2\end{pmatrix}V=\frac{4(\al^2+\de_2)-2\be_1}{\al}\begin{pmatrix}1\\-1\end{pmatrix}.
$$
As $\begin{pmatrix}-4\de_1+\be_1-1 & \al^2 \\-\be_1 & -4\de_2-\al^2\end{pmatrix}=P_2(0)\not=0$, the complex vector $V$ can be solved uniquely as 
$$V=\frac{1}{P_2(0)}\frac{4(\al^2+\de_2)-2\be_1}{\al}\begin{pmatrix}-4\de_2 \\ 4\de_1+1 \end{pmatrix}.$$
Therefore, we have
\begin{equation}\label{10010}
\Psi_{10010}=\frac{1}{P_2(0)}\frac{4(\al^2+\de_2)-2\be_1}{\al}e^{2ix}\begin{pmatrix}-4\de_2 \\ 4\de_1+1 \end{pmatrix}.
\end{equation}

Now we look for $\Psi_{10100}$ which is determined by the operator equation 
$$(2i\om-\mathbf L_{\be_1})\Psi_{10100}=2\mathbf R_{20}(\xi_1,\xi_2).$$
First, we find by direct computation that 
$$2\mathbf R_{20}(\xi_1,\xi_2)=\frac{4(-\al^2-\de_2+i\om)+2\be_1}{\al}\begin{pmatrix}1\\-1\end{pmatrix},$$
which implies that $\Psi_{10100}$ has the form $\Psi_{10100}=e^{0ix}V$ for some complex vector $V$.
As $2i\om\in\rho(\mathbf L_{\be_1})$, we can solve $\Psi_{10100}$ uniquely as before and obtain 
\begin{equation}\label{10100}
\Psi_{10100}=\frac{1}{P_0(2i\om)}\frac{4(-\al^2-\de_2+i\om)+2\be_1}{\al}
\begin{pmatrix}2i\om\\ -1-2i\om \end{pmatrix}.
\end{equation}

With \eqref{11000}, \eqref{10010} and \eqref{10100} at hand, we are now ready to compute the terms $6\mathbf R_{30}(\xi_1, \xi_2, \bar{\xi_2})$, $\Psi_{00110}$, $2\mathbf R_{20}(\xi_1, \Psi_{00110})$, $2\mathbf R_{20}(\bar\xi_2, \Psi_{10100})$ and $2\mathbf R_{20}(\bar\xi_2, \Psi_{10100})$. Direct computations yield:
$$6\mathbf R_{30}(\xi_1, \xi_2, \bar{\xi_2})=\frac{2}{\al^2}\Big(-3(\al^2+\de_2)+i\om\Big)e^{ix}\begin{pmatrix}1\\-1\end{pmatrix},$$
$$\Psi_{00110}=S\Psi_{11000}=\Psi_{11000}=\frac{4(\al^2+\de_2)-2\be_1}{\al^3}\begin{pmatrix}0\\1\end{pmatrix},$$
$$2\mathbf R_{20}(\xi_1, \Psi_{00110})=\frac{8(\al^2+\de_2)-4\be_1}{\al^2}e^{ix}\begin{pmatrix}1\\-1\end{pmatrix},$$
\begin{align*}
2\mathbf R_{20}(\xi_2, \Psi_{10010})=&\frac{4}{P_2(0)}\frac{2(\al^2+\de_2)-\be_1}{\al}
e^{ix}\begin{pmatrix}1\\-1\end{pmatrix}\times\Big(\al(4\de_1+1)-\frac{4\de_2}{\al}(i\om+1+\de_1)\Big),
\end{align*}
\begin{align*}
2\mathbf R_{20}(\bar\xi_2, \Psi_{10100})=&\frac{4}{P_0(2i\om)}\frac{2(-\al^2-\de_2+i\om)+\be_1}{\al}
\times\Big(-\al(2i\om+1)+\frac{2i\om}{\al}(-i\om+1+\de_1)\Big) \\& e^{ix}\begin{pmatrix}1\\-1\end{pmatrix}.
\end{align*}

Gleaning the information above and again noticing \eqref{product}, we have
\begin{align}\label{c expression}
c=&\langle 2\mathbf R_{20}(\xi_1, \Psi_{00110})+2\mathbf R_{20}(\xi_2, \Psi_{10010})+2\mathbf R_{20}(\bar\xi_2, \Psi_{10100})+6\mathbf R_{30}(\xi_1,\xi_2, \bar\xi_2), \xi_1^{*}  \rangle\nonumber\\
=&-\frac{i(\de_2+i\om)}{2\om}\Big\{\frac{2(\al^2+\de_2)-4\be_1+2i\om}{\al^2}+C_1+C_2 \Big\},
\end{align}
where $C_1$ and $C_2$ are given by
\begin{align}\label{C1}
C_1&=\frac{4}{P_2(0)}\frac{2(\al^2+\de_2)-\be_1}{\al}
\times\Big(\al(4\de_1+1)-\frac{4\de_2}{\al}(i\om+1+\de_1)\Big)\nonumber\\
&=\frac{4}{P_2(0)}\frac{2(\al^2+\de_2)-\be_1}{\al}
\times\Big(\al(4\de_1+1)-\frac{4\de_2(1+\de_1)}{\al}-\frac{4\de_2\om}{\al}i\Big),
\end{align}
\begin{align}\label{C2}
C_2&=\frac{4}{P_0(2i\om)}\frac{2(-\al^2-\de_2+i\om)+\be_1}{\al}
\times\Big(-\al(2i\om+1)+\frac{2i\om}{\al}(-i\om+1+\de_1)\Big)\nonumber\\
&=\frac{4}{P_0(2i\om)}\frac{\be_1-2(\al^2+\de_2)+2i\om}{\al}
\times\Big((-\al+\frac{1+\de_1}{\al})2i\om+(\frac{2\om^2}{\al}-\al)\Big)\nonumber\\
&=\frac{4}{P_0(2i\om)}\frac{\be_1-2(\al^2+\de_2)+2i\om}{\al^2}
\times\Big((-\al^2+1+\de_1)2i\om+(2\om^2-\al^2)\Big)\nonumber\\
&=\frac{4}{P_0(2i\om)}\frac{1+\de_1-\de_2-\al^2+2i\om}{\al^2}
\times\Big((-\al^2+1+\de_1)2i\om+(2\om^2-\al^2)\Big)\nonumber\\
&=\frac{4}{P_0(2i\om)\al^2} \Big\{ (1+\de_1-\de_2-\al^2)(2\om^2-\al^2)-4(1+\de_1-\al^2)\om^2 +\nonumber\\
&\Big( (2\om^2-\al^2)+(1+\de_1-\de_2-\al^2)(1+\de_1-\al^2)   \Big)2i\om  \Big\}.
\end{align}
Easy calculation shows that
\begin{equation}\label{P20}
P_2(0)=\al^2(4\de_1-4\de_2+1)+12\de_1\de_2-4\de_2^2,
\end{equation}
which is a real number.

In view of \eqref{P2omega}, we can write $C_2$ as follows
$$C_2=\frac{-12}{\al^2[(\al^2-4\de_1\de_2)^2+4(\de_1+\de_2)^2\om^2]}(C_{2r}+iC_{2i})$$
with $C_{2r}$ and $C_{2i}$ given by
\begin{align}\label{C2r}
C_{2r}&=(\al^2-4\de_1\de_2)\Big((1+\de_1-\de_2-\al^2)(2\om^2-\al^2)-4(1+\de_1-\al^2)\om^2\Big)\nonumber\\
&-4\om^2(\de_1+\de_2)\Big( 2\om^2-\al^2+(1+\de_1-\de_2-\al^2)(1+\de_1-\al^2) \Big),
\end{align}
\begin{align}\label{C2i}
C_{2i}&=2\om\Big\{(\de_1+\de_2)\Big((1+\de_1-\de_2-\al^2)(2\om^2-\al^2)-4(1+\de_1-\al^2)\om^2\Big)\nonumber\\
&+(\al^2-4\de_1\de_2)\Big( 2\om^2-\al^2+(1+\de_1-\de_2-\al^2)(1+\de_1-\al^2) \Big)\Big\}.
\end{align}

Defining $Q_r$ and $Q_i$ as
\begin{align}\label{Qr}
Q_r&=2(\al^2+\de_2)-4\be_1+\frac{4}{P_2(0)}(2\al^2+2\de_2-\be_1)(4\al^2\de_1+\al^2-4\de_2-4\de_1\de_2)\nonumber\\
&-\frac{12 C_{2r}}{(\al^2-4\de_1\de_2)^2+4(\de_1+\de_2)^2\om^2},
\end{align}
\begin{equation}\label{Qi}
Q_i=2\om-\frac{16\de_2\om}{P_2(0)}(2\al^2+2\de_2-\be_1)-\frac{12 C_{2i}}{(\al^2-4\de_1\de_2)^2+4(\de_1+\de_2)^2\om^2},
\end{equation}
we obtain the following expression of $c$ from \eqref{c expression}: 
\begin{equation}\label{c formula}
c=\frac{\om-i\de_2}{2\om \al^2}(Q_r+iQ_i)=\frac{1}{2\om\al^2}\Big( (\om Q_r+\de_2 Q_i) +(\om Q_i-\de_2 Q_r)i \Big).\end{equation}

It is the real parts of the parameters $a$, $b$ and $c$ that are crucial for our analysis of the dynamics driven by \eqref{bifurcation} or equivalently \eqref{reaction}. Therefore, we summarize the information on \eqref{a formula}, \eqref{b formula} and \eqref{c formula} in the following proposition.

\begin{proposition} \label{proposition abc}
The real parts for the parameters $a$, $b$ and $c$ in $P_{\mu}$ in the normal form equation \eqref{normal form equation} for \eqref{bifurcation}
are given by
\begin{equation*}
\mbox{Re}\, a=\frac{1}{2}, \mbox{Re}\, b=\frac{1}{2\al^2}(5\al^2+5\de_2-4\be_1+B_r+\de_2+\de_2 B_i),
\mbox{Re}\, c=\frac{1}{2\om\al^2}(\om Q_r+\de_2 Q_i) 
\end{equation*}
where $B_r$, $B_i$, $Q_r$ and $Q_i$ are given by \eqref{Br}, \eqref{Bi}, \eqref{Qr} and \eqref{Qi} respectively.
\end{proposition}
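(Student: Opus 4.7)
The proposition is essentially a bookkeeping statement: once the constants $a$, $b$, $c$ are expressed via the Fredholm-orthogonality formulas \eqref{a}, \eqref{b}, \eqref{c}, it only remains to compute the auxiliary functions $\Psi_{00001}$, $\Psi_{11000}$, $\Psi_{20000}$, $\Psi_{10010}$, $\Psi_{10100}$, $\Psi_{00110}$, substitute them in, pair with $\xi_1^*$, and separate real and imaginary parts. My plan is to carry this out in three blocks, one for each of $a$, $b$, $c$.

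For $a$, I would first solve $(\mathbf{L}_{\be_1}+\mathbf R_{01})\Psi_{00001}=0$. Because the inhomogeneity is zero and the relevant $2\times 2$ matrix at Fourier mode $n=0$ (which is the non-diffusive block $M_0$ with a single shift) has a nonzero determinant, the unique solution is $\Psi_{00001}=0$. Hence $a=\langle\mathbf R_{01}(\xi_1),\xi_1^*\rangle$, which is a direct inner product computation using \eqref{xi star}; the outcome $a=\tfrac12-i\tfrac{\de_2}{2\om}$ matches the asymptotic expansion of the eigenvalue branch $\la_+(\mu)$ through $i\om$ obtained from the quadratic $P_1(\la,\be)$, providing a built-in cross check. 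In particular $\Re a=\tfrac12$.

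For $b$, the plan is: compute the right-hand sides $-2\mathbf R_{20}(\xi_1,\bar\xi_1)$ and $\mathbf R_{20}(\xi_1,\xi_1)$; observe that they live at Fourier modes $0$ and $2$ respectively; so $\Psi_{11000}$ and $\Psi_{20000}$ must be sought in the form $e^{i0x}V$ and $e^{2ix}V$. Each reduces to a linear algebraic equation whose determinant is $P_0(0)$ and $P_2(2i\om)$ respectively, and both are nonzero because $0,2i\om\in\rho(\mathbf L_{\be_1})$ by Section~\ref{Onset of bifurcation}. Inverting yields explicit column vectors; substitution in \eqref{b}, using the pivotal pairing $\langle e^{ix}(1,-1)^T,\xi_1^*\rangle=-\tfrac{i(\de_2+i\om)}{2\om}=\tfrac{\om-i\de_2}{2i\om}$, produces \eqref{b expression}. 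Introducing the abbreviations $N_r,N_i$ of \eqref{Nr}, \eqref{Ni} to denote the real/imaginary parts of the scalar product $\bigl(-2(\al^2+\de_2)+\be_1+2i\om\bigr)\bigl(-\al^2(2i\om+4\de_1+1)-(2i\om+4\de_2)(i\om-1-\de_1)\bigr)$, and applying \eqref{P2omega} for $1/P_2(2i\om)$, I recover $B_r,B_i$ in \eqref{Br},\eqref{Bi} after a short bilinear calculation. Taking the real part of $\tfrac{\om-i\de_2}{2\om\al^2}\bigl(5(\al^2+\de_2)-4\be_1+B_r+i\om(1+B_i)\bigr)$ yields exactly the stated $\Re b$.

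The computation for $c$ follows the same template applied to $\Psi_{00110}=S\Psi_{11000}$, $\Psi_{10010}$, $\Psi_{10100}$. The inhomogeneities for $\Psi_{10010}$ and $\Psi_{10100}$ concentrate at Fourier modes $2$ and $0$ respectively, with determinants $P_2(0)$ (real, see \eqref{P20}) and $P_0(2i\om)$ (computed via \eqref{P2omega}-type formula with $n=0$). Inserting the four auxiliary quantities into \eqref{c} and using \eqref{product} gives \eqref{c expression}, and organizing the two contributions as $C_1$ and $C_2$ (with $C_2$ broken into $C_{2r},C_{2i}$ via \eqref{C2r},\eqref{C2i}) reduces everything to $Q_r,Q_i$ as in \eqref{Qr},\eqref{Qi}. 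Taking the real part of $\tfrac{\om-i\de_2}{2\om\al^2}(Q_r+iQ_i)$ delivers $\Re c=\tfrac{1}{2\om\al^2}(\om Q_r+\de_2 Q_i)$, completing the proposition.

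The computations themselves are entirely mechanical; the only genuine obstacle is the algebraic accounting of which mode each auxiliary function sits at and the careful separation of real and imaginary parts — most notably, correctly writing $1/P_2(2i\om)$ as a complex fraction with denominator $(\al^2-4\de_1\de_2)^2+4(\de_1+\de_2)^2\om^2$ so that the bilinear combination defining $B_r,B_i,C_{2r},C_{2i}$ stays real. Once this organizational device is in place, the three real parts drop out as stated, and the rest (the imaginary parts) is recorded implicitly in the same expressions for use in \eqref{S1} if one wishes to verify the non-degeneracy inequalities on concrete $(\de_1,\de_2,\al)$ as the paper does.
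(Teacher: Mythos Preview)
Your proposal is correct and follows essentially the same route as the paper: solve the auxiliary operator equations at the appropriate Fourier modes ($n=0$ or $n=2$), invoke invertibility via $P_n(0)$ or $P_n(2i\om)$, substitute into the Fredholm pairings \eqref{a}--\eqref{c} using \eqref{product}, and organize the resulting real and imaginary parts through the abbreviations $N_r,N_i,B_r,B_i,C_{2r},C_{2i},Q_r,Q_i$. One small slip: your rewriting of $-\tfrac{i(\de_2+i\om)}{2\om}$ as $\tfrac{\om-i\de_2}{2i\om}$ carries an extra $i$ in the denominator---it should be $\tfrac{\om-i\de_2}{2\om}$---but this is a typo, not a gap in the argument.
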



\section{Analysis of Dynamics}

The functional equalities in \eqref{operator equations}
must hold once a choice of function spaces $\mathbf Z, \mathbf Y$ and $\mathbf X$ is made. Here we shall remark that $\mathbf X$ can not be chosen as $L^2_0(\mathbf T)\times L^2_0(\mathbf T)$ (hence there can not be mean-zero restriction in the space $\mathbf X$) or else the functional relations
$\mathbf L_{\be_1}\Psi_{11000}=-2\mathbf R_{20}(\xi_1,\bar\xi_1),\,\Psi_{00110}=S\Psi_{11000}$
and
$(2i\om-\mathbf L_{\be_1})\Psi_{10100}=2\mathbf R_{20}(\xi_1,\xi_2)$
have no solutions. This is determined by the structure of \eqref{1a}-\eqref{1d} or equivalently the form of \eqref{bifurcation}.
The mean-zero requirement in the space $H^2_0(\mathbf T)\times H^2_0(\mathbf T)$ would exclude the constant term in the Fourier expansion of a function. See also the computations of $\Psi_{11000}$ and $\Psi_{10100}$ in last section. As mean-zero perturbations of the uniform state correspond to different distributions of the same amount of chemical reactants, these perturbations represent the inhomogeneity of the concentrations of
the chemical reactants. A direct consequence of our analysis is that the wave structure we demonstrated for the system \eqref{Brusselator} is indeed not solely due to the inhomogeneity of the concentrations. 

The analysis of the dynamics \eqref{normal form equation} is now routine (see e.g., \cite{LY, Yao}). We produce the analysis here for completeness and for the convenience of readers. For our purpose, it is sufficient to consider the truncated normal form at order three as the wave structure is persistent for \eqref{rd} and \eqref{normal form equation} by implicit function theorem.

To proceed, we introduce the polar coordinates $z_1=r_1e^{i\theta_1}$ and $z_2=r_2e^{i\theta_2}$,
the truncated normal form at cubic order becomes
\begin{equation}\label{rd}
\begin{cases}
\frac{dr_1}{dt}=r_1 \big((\mbox{Re}\,a)\mu+(\mbox{Re}\,b) r_1^2 +(\mbox{Re}\,c)  r_2^2\big),\\
\frac{dr_2}{dt}=r_2 \big((\mbox{Re}\,a)\mu+(\mbox{Re}\,b) r_2^2 +(\mbox{Re}\,c) r_1^2\big),\\
\frac{d\theta_1}{dt}=\omega+(\mbox{Im}\,a)\mu +(\mbox{Im}\,b) r_1^2+(\mbox{Im}\,c) r_2^2,\\
\frac{d\theta_2}{dt}=\omega+(\mbox{Im}\,a)\mu +(\mbox{Im}\,b) r_2^2+(\mbox{Im}\,c) r_1^2.\\
\end{cases}
\end{equation}

The equations on $(r_1, r_2)$ and $(\theta_1, \theta_2)$ decouple. The point to do the analysis is that we can first seek bifurcated nontrivial equilibria in the radial equations by assuming $r_1\equiv0$ or $r_2\equiv0$, or $r_1=r_2$ and these bifurcated equilibria correspond to time periodic solutions to the system \eqref{rd} due to the rotations given by $(\theta_1, \theta_2)$-equation. This point is guided by the symmetry of the normal form equation, in particular, the system is invariant if one interchanges $r_1$ and $r_2$, which is induced by $SO(2)$-symmetry.
For us, we always do analysis in a small neighborhood of $\mu=0$ in $\R^1$. 

We shall first analyze the radial equations. As we have obtained that $\mbox{Re}\,a=\frac{1}{2}$, we consider the auxiliary real function $f(r)=\frac{1}{2}\mu+\Lambda r^2$ defined on $[0,\epsilon]$ for some small $\epsilon>0$ where $\Lambda$ is a fixed real constant whose sign is important for us. If $\mu \Lambda>0$, $f$ has no roots in the interval $(0, +\infty)$. If $\mu \Lambda<0$, $f$ has a nontrivial root $r=\sqrt{-\frac{\mu}{2\Lambda}}=O(|\mu|)^{1/2})$. Now, consider the right hand sides of the radial equations of $r_1$ and $r_2$. If we let $r_2\equiv 0$, then the right hand side of equation on $r_1$ contains a factor with the same structure as $f(r)$. We can consider the $r_2$ equation similarly. Hence the above analysis for $f(r)$ applies.  Let $r_{*}(\mu)=\sqrt{-\frac{\mu}{2\mbox{Re}\,b}}$. We conclude that besides the trivial solution $(0,0)$, there are bifurcated solutions of the forms $(r_{*}(\mu), 0)$, and $(0, r_{*}(\mu))$. Further, if $\mbox{Re}\,b+\mbox{Re}\,c$ does not vanish, we may consider the situation $r_1\equiv r_2$. In such a situation, the $r_1$ and $r_2$ equations are the same and both contain a factor of the form $f(r)$ in the right hand sides. Therefore, there are bifurcated solutions of the form $( r_{*}(\mu), r_{*}(\mu))$ with $r_{*}(\mu)=\sqrt{-\frac{\mu}{2(\mbox{Re}\,b+\mbox{Re}\,c)}}$.

Now, we are in a position to analyze the two angular equations. From the above analysis, we know that all the three families of bifurcated solutions have magnitude $O(|\mu|^{1/2})$. As a consequence, we could arrange that $\frac{d\theta_1}{dt}\geq\frac{\omega}{2}>0$ and $\frac{d\theta_2}{dt}\geq\frac{\omega}{2}>0$ when $|\mu|$ remains small, which enables us to conclude that all the three families of bifurcated equilibria correspond to genuine time periodic waves of the system \eqref{rd}. The equilibria $(r_{*}(\mu), 0)$ and $(0, r_{*}(\mu))$ correspond to rotating waves on $r_1$-axis and $r_2$-axis, which is the same as for the Hopf bifurcation with $SO(2)$ symmetry. The symmetry $S$ plays the role of exchanging the two axes, i.e., exchanging the rotating waves corresponding to $r_2=0$ into the rotating waves corresponding to $r_1=0$. The equilibria $(r_{*}(\mu), r_{*}(\mu))$ with $r_1=r_2$ correspond to standing waves, another class of bifurcating periodic solutions. These waves correspond to a torus of solutions of the normal form
\begin{align*}
V_0(t,\mu, \phi_1, \phi_2)&=r_{*}(\mu)\big(e^{i\omega_{*}(\mu)t+\phi_1}\xi_1+ e^{i\omega_{*}(\mu)t+\phi_2}\xi_2\big)\\
&+r_{*}(\mu)\big( e^{-(i\omega_{*}(\mu)t+\phi_1)}\bar \xi_1+ e^{-(i\omega_{*}(\mu)t+\phi_2)}\bar\xi_2\big)
\end{align*}
for any $(\phi_1,\phi_2)\in\mathbb{R}^2$, which induces a torus
of solutions $U(t,\mu, \phi_1,\phi_2)$ in $\mathbf Y$ of the nonlinear
perturbation system \eqref{bifurcation}. The $\omega_{*}(\mu)$ is the phase
function determined by the $(\theta_1, \theta_2)$-equation in system
\eqref{rd} such that $\omega_{*}(0, 0)=\omega$. These standing waves in
addition possess the following symmetry
$$R(\phi_2-\phi_1)SU(t,\mu,\phi_1,\phi_2)=U(t,\mu,\phi_1,\phi_2), \,\, R(2\pi)U(t,\mu, \phi_1,\phi_2)=U(t,\mu,\phi_1,\phi_2),$$
$$R(\pi)U(t,\mu,\phi_1,\phi_2)=U(t+\frac{\pi}{\omega_{*}(\mu)}, \mu, \phi_1, \phi_2), \,\, SU(t,\mu,\phi_1, \phi_1)=U(t,\mu_1,\phi_1, \phi_1).$$
The analysis of the stability of the three families of bifurcated waves are straightforward by examining the signs of $\mbox{Re}\,b$ and $\mbox{Re}\, b\pm\mbox{Re}\, c$.

\section*{Appendix}
\appendix

In this appendix we collect the center manifold theorem and normal form theorem. Interested readers can refer to 
the works \cite{BB, Ca, GSS, He, HI, LY, MW, Yao} and the references therein for further information.
\section{Center manifold theorem}
Here we recall a version of the center manifold theorem with parameters and symmetry adapted to our study.

 \begin{theorem} (Center manifold theorem, see \cite{HI, LY, Yao})\label{center manifold theorem}
	Let the inclusions in the Banach space triplet $\mathbf Z \subset \mathbf Y\subset \mathbf X$ be continuous. Consider a differential equation in the Banach space $\mathbf X$ of the form
	$$\frac{dv}{dt}=\mathbf {L}v+\mathbf{R}(v,\mu)$$ and assume that
	
	(1) (form of nonlinearity) for some $k\geq 2$, there exist neighborhoods
	$\mathcal{V}_{v}\subset\mathbf Z$ and
	$\mathcal{V_\mu}\subset\mathbb{R}^m$ of $(0,0)$ such that
	$\mathbf{R}\in C^k(\mathcal{V}_u\times \mathcal{V_{\mu}},
	\mathbf Y)$ and
	$$\mathbf{R}(0,0)=0,\,\, D_u\mathbf{R}(0,0)=0.$$
	
	(2) (spectral decomposition) $\mathbf {L}:\mathbf Z\mapsto\mathbf X$ is a bounded linear map
	and there exists some constant $\gamma>0$ such that
	$$\inf\{\mbox{Re}\,\lambda; \lambda\in \sigma_u(\mathbf L)\}>\gamma,\,\, \sup\{\mbox{Re}\,\lambda; \lambda\in \sigma_s(\mathbf L)\}<-\gamma,$$
	and the set $\sigma_c(\mathbf L)$ consists of a finite number of eigenvalues with finite algebraic multiplicities.
	
	(3) (resolvent estimates) for Hilbert space triplet $\mathbf Z \subset \mathbf Y\subset \mathbf X$, assume
	there exists a positive constant $\omega_0>0$ such that $i\lambda\in \rho(\mathcal L)$ for all real $\lambda$ such that $|\lambda|>\omega_0$ and
	$\|(i\lambda-\mathbf L)^{-1}\|_{\mathbf X\mapsto \mathbf X}\lesssim\frac{1}{|\lambda|}$; for Banach space triplet, we need further
	$\|(i\lambda-\mathbf L)^{-1}\|_{\mathbf Y\mapsto \mathbf X}\lesssim\frac{1}{|\lambda|^{\alpha}}$ for some $\alpha\in [0,1)$.
	
	Then there exists a map $\Psi\in C^k(\mathbf Z_c, \mathbf Z_h)$ and a neighborhood 
	$\mathcal O_{v}\times \mathcal O_{\mu}$ of $(0, 0)$ in $\mathbf Z\times \mathbb R^m$ such that
	
	(a) (tangency) $\Psi(0, 0)=0$ and $D_u\Psi(0,0)=0$.
	
	(b) (local flow invariance) the manifold $\mathcal M_0 (\mu)=\{ v_0+\Psi(v_0,\mu); v_0\in Z_c \}$ has the properties:
	(i) $\mathcal M_0 (\mu)$ is locally invariant, i.e., if $v$ is a solution satisfying $v(0)\in \mathcal M_0 (\mu)\cap \mathcal O_{\mu}$ and $v(t)\in\mathcal O_{v}$ for all $t\in [0, T]$, then
	$v(t)\in\mathcal M_0 (\mu)$ for all $t\in [0, T]$; (ii) $\mathcal M_0 (\mu)$ contains the set of bounded solutions staying in $\mathcal O_{v}$ for all $t\in\mathbb{R}^1$, i.e., if $v$ is a solution satisfying
	$v(t)\in\mathcal O_v$ for all $t\in\mathbb R^1$, then $v(0)\in \mathcal M_0 (\mu)$.
	
	(c) (symmetry) moreover, if the vector field is equivariant in the sense that there
	exists an isometry $\mathbf{T}\in \mathbf L(\mathbf X)\cap \mathbf L(\mathbf Z)$
	which commutes with the vector field in the original system,
	$$[\mathbf{T}, \mathbf{L}]=0,\,\,[\mathbf{T}, \mathbf{R}]=0,$$
	then the $\Psi$ commutes
	with $\mathbf{T}$ on $\mathbf Z_c$: $[\Psi, \mathbf T]=0$.
\end{theorem}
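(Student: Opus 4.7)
The statement is the standard parameter-dependent center manifold theorem with symmetry, and the plan is to execute a Lyapunov-Perron fixed point construction adapted to the triplet $\mathbf{Z} \subset \mathbf{Y} \subset \mathbf{X}$ and to the equivariance hypothesis. First, replace $\mathbf{R}$ by a $\mathbf{T}$-invariant global cut-off $\mathbf{R}_\rho$ agreeing with $\mathbf{R}$ on a ball of radius $\rho$ in $\mathbf{Z}$ and vanishing outside a slightly larger ball; by hypothesis (1) its global Lipschitz constant can be made arbitrarily small by shrinking $\rho$ and $|\mu|$. Bounded trajectories of the modified problem that stay in the $\rho$-ball coincide with those of the original equation, so a global invariant manifold for the cut-off system restricts to the local object required in (b).

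Using the spectral decomposition in (2), split $\mathbf{Z} = \mathbf{Z}_c \oplus \mathbf{Z}_s \oplus \mathbf{Z}_u$ with projections $\Pi_c, \Pi_s, \Pi_u$ and restrictions $\mathbf{L}_c, \mathbf{L}_s, \mathbf{L}_u$. The part $e^{t\mathbf{L}_c}$ is explicit since $\mathbf{Z}_c$ is finite-dimensional. The hyperbolic semigroups $e^{t\mathbf{L}_s}$ and $e^{-t\mathbf{L}_u}$ (for $t\geq 0$) are then constructed from the resolvent bound in (3) and the gap $\gamma$, yielding exponential decay at rate $\gamma$; in the Banach case one exploits that $\mathbf{R}$ lands in $\mathbf{Y}$ and uses the improved bound $\|(i\lambda-\mathbf{L})^{-1}\|_{\mathbf{Y}\to\mathbf{X}} \lesssim |\lambda|^{-\alpha}$ to make the relevant convolution integrals well-defined and valued in $\mathbf{Z}$.

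The core step is to look, for $v_0 \in \mathbf{Z}_c$ and $\mu$ small, for a fixed point in the weighted space $C_\eta = \{ v \in C(\mathbb{R},\mathbf{Z}) \, : \, \sup_t e^{-\eta |t|}\|v(t)\|_{\mathbf{Z}} < \infty \}$ with $0 < \eta < \gamma$, of the Lyapunov-Perron operator
\begin{equation*}
v(t) = e^{t\mathbf{L}_c}v_0 + \int_0^t e^{(t-s)\mathbf{L}_c}\Pi_c \mathbf{R}_\rho(v,\mu)\,ds + \int_{-\infty}^{t} e^{(t-s)\mathbf{L}_s}\Pi_s \mathbf{R}_\rho(v,\mu)\,ds - \int_t^{+\infty} e^{(t-s)\mathbf{L}_u}\Pi_u \mathbf{R}_\rho(v,\mu)\,ds.
\end{equation*}
The spectral gap together with the small global Lipschitz constant of $\mathbf{R}_\rho$ makes the right-hand side a contraction on $C_\eta$. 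Let $v(\cdot; v_0, \mu)$ denote its unique fixed point and set $\Psi(v_0,\mu) := \Pi_h v(0; v_0,\mu)$. Tangency (a) is immediate from $\mathbf{R}(0,0) = D_v\mathbf{R}(0,0)=0$, since $v \equiv 0$ is then the fixed point at $(v_0,\mu) = (0,0)$ and differentiating the fixed point equation kills the nonlinear contribution. Local invariance and uniqueness of bounded orbits (b) follow because any bounded solution in the $\rho$-ball must satisfy the same integral equation (the hyperbolic gap pins down the $\mathbf{Z}_s,\mathbf{Z}_u$ components uniquely). The $C^k$ regularity of $\Psi$ is obtained by the usual bootstrap: differentiate the fixed point equation, re-run the contraction on each derivative, iterate $k$ times. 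Symmetry (c) is a uniqueness argument: since $\mathbf{T}$ is an isometry commuting with $\mathbf{L}$ and with $\mathbf{R}_\rho$, applying $\mathbf{T}$ to the Lyapunov-Perron equation with datum $v_0$ gives the same equation with datum $\mathbf{T} v_0$, so $v(t; \mathbf{T} v_0, \mu) = \mathbf{T}\, v(t; v_0, \mu)$ and hence $[\Psi, \mathbf{T}] = 0$.

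The main technical obstacle is the hyperbolic semigroup construction and the justification of the convolution integrals in the Banach triplet setting, where $\mathbf{L}$ need not be sectorial on $\mathbf{X}$ and only the weaker bound from $\mathbf{Y}$ to $\mathbf{X}$ is available. Arranging that the right-hand side of the Lyapunov-Perron operator maps $C_\eta$ into itself — so that the fixed point really lies in $\mathbf{Z}$, matching the domain of $\mathbf{R}_\rho$ — requires routing the nonlinearity through $\mathbf{Y}$ and interpolating carefully between the three spaces. Once this functional-analytic setup is in place, everything else reduces to the familiar Lyapunov-Perron template.
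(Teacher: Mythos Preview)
The paper does not prove this theorem at all: it is stated in the Appendix as a quoted result from the literature (Haragus--Iooss, Li--Yao, Yao), with the explicit remark that ``interested readers can refer to'' those works for further information. So there is no proof in the paper to compare against.

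That said, your Lyapunov--Perron sketch is exactly the approach taken in the cited source \cite{HI}, and the main points you flag --- the cut-off, the weighted space $C_\eta$, the contraction, the bootstrap for $C^k$ regularity, and the uniqueness argument for equivariance --- are the correct ingredients. Your identification of the main technical obstacle (routing the nonlinearity through $\mathbf{Y}$ and controlling the convolution integrals in the Banach triplet case via the $|\lambda|^{-\alpha}$ bound) is also on target; this is precisely where the triplet structure $\mathbf{Z}\subset\mathbf{Y}\subset\mathbf{X}$ earns its keep and where the argument in \cite{HI} spends most of its effort.
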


\section{Normal form theorem}
Here we give a version of the normal form theorem with symmetry.

\begin{theorem} (Normal form theorem \cite{HI, LY, Yao})\label{normal form theorem}
	Consider a differential equation in $\mathbb{R}^n$ of the form
	$$\frac{dv}{dt}=\mathbf{L}v+\mathbf{R}(v,\mu)$$ and assume that
	
	(1) $\mathbf{L}$ is a linear map in $\mathbb{R}^n$;\\
	
	(2) for some $k\geq 2$, there exist neighborhoods
	$\mathcal{V_v}\subset\mathbb{R}^n$ and
	$\mathcal{V_\mu}\subset\mathbb{R}^m$ of $(0,0)$ such that
	$\mathbf{R}\in C^k(\mathcal{V}_v\times \mathcal{V_{\mu}},
	\mathbb{R}^n)$ and
	$$\mathbf{R}(0,0)=0,\,\, D_u\mathbf{R}(0,0)=0.$$
	Then for any positive integer p, $k>p\geq2$, there exist neighborhoods $\mathcal{V}_1$ and $\mathcal{V}_2$ of $(0,0)$ in
	$\mathbb{R}^n\times \mathbb{R}^m$ such that for any $\mu\in
	\mathcal{V}_2$, there is a polynomial
	$\Pi_{\mu}:\mathbb{R}^n\rightarrow\mathbb{R}^n$ of degree $p$ with
	the following properties.
	
	(i) The coefficients of the monomials of degree $q$ in $\Pi_{\mu}$
	are functions of $\mu$ of class $C^{k-q}$, and
	$$\Pi_0(0)=0,\,\, D_v \Pi_0(0)=0.$$
	
	(ii) For $v\in\mathcal{V}_1$, the polynomial change of variables
	$$v=w+\Pi_{\mu}(w)$$
	transforms the original system into the normal form
	\begin{equation*}
	\frac{dw}{dt}=\mathbf{L}w+\mathcal{N}_{\mu}(w)+\rho(w,\mu),
	\end{equation*} such that
	
	(a) (tangency) For any $\mu\in \mathcal{V}_2$, $\mathcal{N}_{\mu}$ is a
	polynomial $\mathbb{R}^n\rightarrow\mathbb{R}^n$ of degree $p$, with
	coefficients depending on $\mu$, such that the coefficients of the
	monomials of degree $q$ are of class $C^{k-q}$, and
	$$\mathcal{N}_0(0)=0,\,\, D_v\mathcal{N}_0(0)=0.$$
	
	(b) (characteristic condition) The equality
	$$\mathcal{N}_{\mu}(e^{t\mathbf{L}^*}w)=e^{t\mathbf{L}^*}\mathcal{N}_{\mu}(w)$$
	holds for all $(t,\mu)\in\mathbb{R}\times\mathbb{R}^n$ and
	$\mu\in\mathcal{V}_2$.
	
	(c) (smoothness )The map $\rho$ belongs to
	$C^k(\mathcal{V}_1\times\mathcal{V}_2,\mathbb{R}^n)$, and
	$$\rho(w,\mu)=o(|w|^p)$$
	for all $\mu\in \mathcal{V}_2$.
	
	(d) (symmetry) Moreover, if the vector field is equivariant in the sense that there
	exists an isometry $\mathbf{T}:\mathbb{R}^n\rightarrow\mathbb{R}^n$
	which commutes with the vector field in the original system,
	$$[\mathbf{T}, \mathbf{L}]=0,\,\,[\mathbf{T}, \mathbf{R}]=0,$$
	then the polynomials $\Pi_{\mu}$ and $\mathcal{N}_{\mu}$ commute
	with $\mathbf{T}$ for all $\mu\in\mathcal{V}_2$.
\end{theorem}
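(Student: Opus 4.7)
The plan is a standard order-by-order elimination of non-resonant terms via near-identity polynomial substitutions, combined with the Elphick--Tirapegui--Brachet--Coullet--Iooss characterization of the normal form using the adjoint linear part $\mathbf{L}^*$.

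First I would Taylor-expand $\mathbf{R}(v,\mu)$ in $v$ about $v=0$, writing $\mathbf{R}(v,\mu)=\sum_{q=2}^{p}R_q(v,\mu)+\mathbf{R}_{p+1}(v,\mu)$, where each $R_q(\cdot,\mu)$ is homogeneous of degree $q$ on $\mathbb{R}^n$ with coefficients in $C^{k-q}(\mathcal{V}_\mu)$ and $\mathbf{R}_{p+1}=o(|v|^p)$ uniformly in $\mu$. I would then construct $\Pi_\mu=\sum_{q=2}^{p}\Phi_q(\cdot,\mu)$ with each $\Phi_q$ in the finite-dimensional space $\mathcal{H}_q$ of homogeneous polynomial vector fields of degree $q$ on $\mathbb{R}^n$, inductively in $q$, so that the near-identity substitution $v=w+\Pi_\mu(w)$ brings the equation into normal form up to order $p$.

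At order $q$, the substitution $v\mapsto w+\Phi_q(w,\mu)$ modifies the degree-$q$ part of the vector field via the cohomological equation $R_q-\mathrm{ad}_{\mathbf{L}}\Phi_q=N_q$, where $\mathrm{ad}_{\mathbf{L}}(\Phi)(w):=D\Phi(w)\,\mathbf{L}w-\mathbf{L}\Phi(w)$. The key algebraic lemma is that on $\mathcal{H}_q$ there is a natural inner product (a Bargmann--Fock-type inner product, for which monomial basis elements are orthogonal with appropriate combinatorial weights) under which $(\mathrm{ad}_{\mathbf{L}})^*=\mathrm{ad}_{\mathbf{L}^*}$; this yields the direct sum decomposition $\mathcal{H}_q=\mathrm{Range}(\mathrm{ad}_{\mathbf{L}})\oplus \mathrm{Ker}(\mathrm{ad}_{\mathbf{L}^*})$. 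I would then take $N_q$ to be the orthogonal projection of $R_q$ onto $\mathrm{Ker}(\mathrm{ad}_{\mathbf{L}^*})$ and $\Phi_q$ to be the pseudo-inverse of $\mathrm{ad}_{\mathbf{L}}$ applied to $R_q-N_q$. Summing, $\mathcal{N}_\mu:=\sum_q N_q\in\mathrm{Ker}(\mathrm{ad}_{\mathbf{L}^*})$, and after integrating this infinitesimal relation one recovers the characteristic condition $\mathcal{N}_\mu(e^{t\mathbf{L}^*}w)=e^{t\mathbf{L}^*}\mathcal{N}_\mu(w)$. The tangency statements $\Pi_0(0)=0=D_v\Pi_0(0)$ and $\mathcal{N}_0(0)=0=D_v\mathcal{N}_0(0)$ are automatic from starting the recursion at $q=2$; the $C^{k-q}$ regularity in $\mu$ of the degree-$q$ coefficients is preserved because the projection and pseudo-inverse depend only on $\mathbf{L}$ (which is $\mu$-independent) and act linearly on the coefficients of $R_q$. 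The remainder $\rho(w,\mu)=o(|w|^p)$ with $C^k$ smoothness comes from collecting all terms of order greater than $p$ generated during the iteration, plus the Taylor remainder $\mathbf{R}_{p+1}$.

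To handle the equivariance conclusion, I would run the entire recursion inside the $\mathbf{T}$-invariant subspace $\mathcal{H}_q^{\mathbf{T}}:=\{\Phi\in\mathcal{H}_q:\mathbf{T}\Phi(w)=\Phi(\mathbf{T}w)\}$. Because $[\mathbf{T},\mathbf{L}]=[\mathbf{T},\mathbf{R}]=0$, each Taylor component $R_q$ lies in $\mathcal{H}_q^{\mathbf{T}}$, and both $\mathrm{ad}_{\mathbf{L}}$ and $\mathrm{ad}_{\mathbf{L}^*}$ preserve this subspace. Choosing the Bargmann--Fock inner product on $\mathcal{H}_q$ to be $\mathbf{T}$-invariant (which is possible since $\mathbf{T}$ is an isometry, by averaging over the compact closure of the group it generates) makes the orthogonal projection and pseudo-inverse respect $\mathcal{H}_q^{\mathbf{T}}$, so $\Phi_q,N_q\in\mathcal{H}_q^{\mathbf{T}}$ at every order, giving $[\Pi_\mu,\mathbf{T}]=[\mathcal{N}_\mu,\mathbf{T}]=0$. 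The main obstacle in the whole argument is the identification $(\mathrm{ad}_{\mathbf{L}})^*=\mathrm{ad}_{\mathbf{L}^*}$ yielding the direct sum decomposition of $\mathcal{H}_q$; once this algebraic fact is in place (verified by a direct monomial computation), the rest of the proof is a bookkeeping exercise on Taylor expansions and the preservation of smoothness in $\mu$.
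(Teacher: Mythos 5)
The paper does not actually prove Theorem~\ref{normal form theorem}: it is stated in the appendix as a quoted background result with a pointer to \cite{HI, LY, Yao}, so there is no in-paper argument to compare against. Your outline is the standard proof from the cited source of Haragus--Iooss (order-by-order solution of the homological equation $\mathrm{ad}_{\mathbf L}\Phi_q = \tilde R_q - N_q$, the Fischer/Bargmann--Fock inner product giving $(\mathrm{ad}_{\mathbf L})^{*}=\mathrm{ad}_{\mathbf L^{*}}$ and hence $\mathcal H_q=\mathrm{Range}(\mathrm{ad}_{\mathbf L})\oplus\ker(\mathrm{ad}_{\mathbf L^{*}})$, and the equivariant recursion restricted to $\mathcal H_q^{\mathbf T}$), and it is essentially correct. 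The only point to tighten is that for $q\geq 3$ the term to be normalized is not the original Taylor coefficient $R_q$ but the degree-$q$ part of the vector field after the substitutions at orders $2,\dots,q-1$, which picks up cross terms in the lower-order $\Phi_j$; these still lie in $\mathcal H_q$ (and in $\mathcal H_q^{\mathbf T}$), so the scheme goes through unchanged.
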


We remark that both the center manifolds and normal forms are not unique in general.


{\small
        $^\dagger$ Department of Mathematics,
	University of Iowa, Iowa City, IA, 52242, USA. 
	
	Emails: jinghuayao@gmail.com; yaoj@indiana.edu; jinghua-yao@uiowa.edu

	$^\ddagger$ Department of Mathematics,
	Indiana University, Bloomington,
       
        IN, 47408, USA. 
	
	Email: wang264@indiana.edu 
	
	$^\ddagger$ Corresponding author.

\end{document}